\documentclass[a4paper,10pt,reqno, english]{amsart}

\usepackage{amsmath,amssymb,amscd,amsthm,amsfonts}
\usepackage{graphicx,subfigure}
\usepackage{hyperref}
\usepackage{dsfont}
\usepackage[nobysame, alphabetic]{amsrefs}
\usepackage{tikz}
\usepackage[capitalise]{cleveref}
\usepackage{mathrsfs}
\usepackage{enumitem}

\newtheorem{theorem}{Theorem}
\newtheorem{lemma}{Lemma}

\newtheorem{corollary}{Corollary}
\newtheorem{example}{Example}

\newtheorem{definition}{Definition}
\newtheorem{remark}{Remark}

\newcommand{\R}{\mathds{R}}
\newcommand{\Z}{\mathds{Z}}

\newcommand{\Hc}{\mathcal H}
\newcommand{\Rc}{\mathcal R}

\def\rr{\mathds{R}}

\DeclareMathOperator{\conv}{conv}
\DeclareMathOperator{\hs}{hs}
\DeclareMathOperator{\pt}{pt}

\newcommand{\disc}{\operatorname{disc}}
\newcommand{\herdisc}{\operatorname{herdisc}}
\newcommand{\lindisc}{\operatorname{lindisc}}
\newcommand{\Sym}{\operatorname{Sym}}

\newcommand{\im}{\operatorname{im}}
\newcommand{\tr}{\operatorname{tr}}

\newcommand{\spr}{\operatorname{spr}}

\newcommand{\hooklongrightarrow}{\lhook\joinrel\longrightarrow}

\title{Discrepancy theory, Tverberg's theorem, and regression depth}

\hypersetup{
  pdftitle={Discrepancy theory, Tverberg's theorem, and regression depth},
  pdfauthor={}
}

\author[Lopez]{Aleksey Lopez}\address{University of Michigan, Ann Arbor, MI 48109, United States}
\email{lopezag@umich.edu}

\author[Sober\'on]{Pablo Sober\'on}\address{Baruch College, City University of New York, One Bernard Baruch Way, New York, NY 10010, United States} 
\email{psoberon@gc.cuny.edu}

\thanks{The research of A. Lopez was supported by NSF grant DMS-2349366 and by Jane Street.  The research of P. Sober\'on is supported by NSF CAREER grant DMS-2237324 and a PSC-CUNY Track 1 award.}

\keywords{Tverberg's theorem, tolerance, discrepancy theory, hereditary discrepancy, regression depth}

\subjclass[2020]{52A35 (Primary); 11K38, 52C35 (Secondary)}

\begin{document}

\begin{abstract}
We prove new bounds for Tverberg's theorem with tolerance.  We show that $N = rt+\Theta_{d,r}(t^{1/2-1/(2d)})$, where $N$ is the smallest number such that any set of $N$ points in $\mathbb{R}^d$ has a partition into $r$ parts such that the convex hulls of the parts intersect even if we remove any $t$ of the points.

We extend Tverberg's theorem with tolerance to families of hyperplanes in $\rr^d$, and show that for any set of $rt + O_{d,r}(t^{1/2-1/(2d)}\sqrt{\log (t+1)})$ hyperplanes in $\rr^d$ there exists a partition of them into $r$ parts such that the regression hulls of the parts intersect even if any $t$ hyperplanes are removed.

Our bounds follow from establishing a connection between Tverberg-type results and discrepancy theory.
\end{abstract}

\maketitle

\section{Introduction}

Given sufficiently many points in Euclidean space, Tverberg’s theorem guarantees that they can be divided into a prescribed number of groups whose convex hulls all contain a common point.  More precisely, it states that \textit{given $(r-1)(d+1)+1$ points in $\rr^d$, they can be split into $r$ parts whose convex hulls intersect} \cite{Tverberg1966}.  Such a partition is called a Tverberg partition.  Tverberg's theorem is a key result on the combinatorial properties of finite families of points in Euclidean spaces, with many variations and extensions \cites{Barany2018, DeLoera2019, Blagojevic2017}.

One particular variation of Tverberg's theorem, known as Tverberg with tolerance, asks how many points we need in $\rr^d$ to have a Tverberg partition that resists the removal of any $t$ points.  Formally, given a set $X$ of points in $\rr^d$, we want to split them into sets $X_1,\dots, X_r$ such that $\bigcap_{j=1}^r \conv (X_j \setminus C)\neq \emptyset$ for any $C \subset X$ of at most $t$ points. The parameter $t$ is the tolerance.  This question was introduced by Larman, when he proved that $2d+3$ points in $\rr^d$ always have a partition into two sets with tolerance $1$ \cite{Larman1972}.  We denote by $N_{\pt}(t,d,r)$ the smallest number of points that guarantee the existence of such a partition.

The exact value of $N_{\pt}(t,d,r)$ is only known when $t=0$ (given by Tverberg), when $d=1$ \cite{Mulzer2014}, a handful of other particular cases when $d=2$ \cite{Bereg2020}, $N_{\pt}(1,3,2)=9$ \cite{Bereg2020}, and $N_{\pt}(1,4,2)=11$ \cite{Forge2001}.  Several variations of this problem have been studied, including colorful and algorithmic versions \cites{Montejano2011, Soberon2012, Soberon2015, Sarkar2022, Bereg2022}.  One of the biggest surprises concerning this problem is likely the bounds found by Garc\'ia-Col\'in, Raggi, and Rold\'an-Pensado.  They showed that if $r,d$ are fixed, then $N_{\pt}(t, d, r) = rt + o(t)$ \cite{GarciaColin2017}. In other words, as $t$ becomes large the effect of the dimension mostly vanishes.  This was improved by the second author to $N_{\pt}(t, d, r) = rt + \tilde{O}_{d,r}(\sqrt{t})$, where the $\tilde{O}_{d,r}(\cdot)$ notation hides terms that are polynomial in $d,r$ and polylogarithmic in $t$ \cite{Soberon2018}.  Our first result is a proof of new upper and lower bounds on $N_{\pt}(t, d, r)$.  In particular, we determine the dependence on $t$ of $N_{\pt}(t, d, r) - rt$.  We state the theorem below for $d\ge 2$ since $N_{\pt}(t,1,r) = r(t+2)-1$ \cite{Mulzer2014}.

\begin{theorem}\label{thm:main-points}
    For positive integers $t, d, r$ with $d, r \ge 2$, we have
    \[
    N_{\pt}(t, d, r) = rt + \Theta_{d,r}\left(t^{1/2 - 1/(2d)}\right).
    \]
\end{theorem}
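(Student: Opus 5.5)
The plan is to prove the upper and lower bounds separately. Both directions rest on the same translation between the tolerance condition and halfspace discrepancy. For a set $X$ of $N$ points and a colouring $X=X_1\cup\dots\cup X_r$, let
\[
D=\max_j \sup_H \Big|\,|X_j\cap H|-\tfrac1r|X\cap H|\,\Big|,
\]
where $H$ ranges over halfspaces. The key external inputs are the classical bounds on the discrepancy of halfspaces. Matou\v{s}ek's upper bound says every $n$-point set in $\rr^d$ has a $2$-colouring with halfspace discrepancy $O_d(n^{1/2-1/(2d)})$. I would iterate it by recursive halving, or use its $r$-colour analogue, to get an $r$-colouring with $D=O_{d,r}(N^{1/2-1/(2d)})$. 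Alexander's lower bound gives a matching $\Omega_d(n^{1/2-1/(2d)})$ for suitable (e.g.\ grid-like or random) point sets.

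\textbf{Upper bound.} For each $j$, let $K_j$ be the set of points $p$ such that every closed halfspace containing $p$ contains at least $t+1$ points of $X_j$. If $\bigcap_j K_j\neq\emptyset$, a common point $p$ lies in every $\conv(X_j\setminus C)$ with $|C|\le t$, and we are done. Each $K_j$ is an intersection of closed halfspaces $H$ whose open complement contains at most $t$ points of $X_j$. By the low-discrepancy colouring, such a complement contains at most $r(t+D)$ points of $X$. Helly's theorem then reduces emptiness of $\bigcap_j K_j$ to $d+1$ such open halfspaces covering $\rr^d$.

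The naive count from this only gives $N\le (d+1)r(t+D)$, which loses a factor $d+1$. This is the main obstacle. What I would try first is to apply the discrepancy colouring not to $X$ directly but inside a Tverberg-type structure. Start from a point $q$ of Tukey depth in $X$ close to $N/(d+1)$, or better a partition of $X$ into many small Tverberg blocks with a common point. Then only use that every halfspace through $q$ contains almost all of some block-union. The aim is to show that any open halfspace avoiding $\bigcap_j K_j$ must miss at least $N-r(t+O(D))$ points. The covering argument then forces $N\le rt+O(rD)$, and substituting $D=O(N^{1/2-1/(2d)})$ with $N=\Theta(rt)$ gives $N_{\pt}(t,d,r)\le rt+O_{d,r}(t^{1/2-1/(2d)})$.

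\textbf{Lower bound.} I would take $N=rt+c\,t^{1/2-1/(2d)}$ points, placed as an Alexander-type configuration with high halfspace discrepancy, for instance a suitably perturbed set on the moment curve or a random set. Given any $r$-partition, the discrepancy lower bound, in its $r$-colour form, gives a halfspace $H$ and a part $X_j$ with $|X_j\cap H|\le \frac1r|X\cap H|-\Omega(t^{1/2-1/(2d)})$. The configuration should be chosen so that $|X\cap H|$ can be taken close to $N$ while the deficiency persists. Removing the at most $t$ points of $X_j\cap H$ then puts $\conv(X_j\setminus C)$ in the complement of $H$. Meanwhile another part, also thinned by removing points in the complement of $H$, lies strictly inside $H$, so the intersection is empty. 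The delicate point is that the extremal discrepancy halfspace must be "large", i.e.\ near-balanced in the right direction. I would handle this by applying Alexander's bound to the caps of a fixed deep cap family and using that discrepancy of halfspaces is attained, up to constants, by caps of size proportional to $N$.
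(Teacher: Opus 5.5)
Your upper-bound argument has a fatal gap. The sufficient condition $\bigcap_j K_j\neq\emptyset$ is false in the regime of the theorem, so no sharper covering count can rescue it, and the factor $d+1$ you flag is a symptom of this rather than a technical loss. Here $K_j=\bigcap_{|C|\le t}\conv(X_j\setminus C)$ is the region of Tukey depth at least $t+1$ in $X_j$, so you are asking for a single point that survives every $C$ at once. If $q\in K_j$, take a hyperplane $L$ through $q$ containing no other point of $X_j$. Each closed side of $L$ contains $q$, hence at least $t+1$ points of $X_j$, so $|X_j|\ge 2t+1$. Thus $\bigcap_j K_j=\emptyset$ whenever $N<r(2t+1)$, in particular for $N=rt+O(t^{1/2-1/(2d)})$ and large $t$. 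Already on a line, the partition $\{1,3,5\},\{2,4\}$ has tolerance $1$ but no such common point.

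The missing idea is to fix $C$ first and separate with one halfspace per part. If $\bigcap_j\conv(X_j\setminus C)=\emptyset$, there are open halfspaces $H_j\supseteq X_j\setminus C$ with $\bigcap_j H_j=\emptyset$; to see this, separate $\prod_j\conv(X_j\setminus C)$ from the diagonal of $(\rr^d)^r$. The closed complements $G_j$ then cover $\rr^d$, so $\sum_j|X\cap G_j|\ge N$. Low discrepancy gives $|X_j\cap G_j|\ge |X\cap G_j|/r-\Delta$, and the sets $X_j\cap G_j$ lie in $C$ and are pairwise disjoint, so $|C|\ge N/r-r\Delta$. No Helly number enters. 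This is Lemma~\ref{lem:dev-tolerance}, which together with Doerr--Fouz and Matou\v{s}ek's hereditary bound gives the paper's upper bound.

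For the lower bound you have the right ingredients, namely a well-spread grid and the Chazelle--Matou\v{s}ek--Sharir/Alexander bound, but the counting is missing. You check $|X_j\cap H|\le t$, but never that the whole removed set satisfies $|X_j\cap H|+|X_k\setminus H|\le t$. Your remedy, forcing $|X\cap H|$ close to $N$ via an unproved claim about caps of linear size, is not what is needed. Write $N=rt+p$. Once every part has more than $t$ points, the deficiency $\frac1r|X\cap H|-|X_j\cap H|$ is at most $p/r+(1-1/r)|X\setminus H|$, so a small complement actually caps it. What works (Lemma~\ref{lem:first-tolerance-bound}) is the spread between two parts on an arbitrary halfspace. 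If $|X_k\cap H|\ge |X_j\cap H|+p$, deleting $X_j\cap H$ and $X_k\setminus H$ costs at most $|X_k|-p$. Since $\sum_{i\neq k}|X_i|+(|X_k|-p)=rt$, either some other part has at most $t$ points and can be deleted outright, or $|X_k|-p\le t$.

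Finally, drop the moment curve. A closed halfspace contains at most $d+1$ runs of consecutive points (in curve order) of a set on it, so the cyclic colouring has $r$-colour discrepancy $O(d)$. By the counting above, such sets need only $rt+O_{d,r}(1)$ points. A small perturbation of points in general position does not change the halfspace range space, so perturbing does not help either.
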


Our second main result concerns Tverberg with tolerance for families of hyperplanes.  To establish analogues of Tverberg's theorem for families of hyperplanes, we need an extension of the convex hull.  Given a set $X$ of points in $\rr^d$, a point $p$ is in $\conv(X)$ if and only if every closed halfspace that contains $p$ also contains at least one point of $X$.  For a set $X'$ of hyperplanes in $\rr^d$, we will denote by $D(X')$ its \textit{regression hull}.  A point $p$ is in $D(X')$ if and only if every closed ray starting from $p$ intersects or is parallel to at least one hyperplane in $X'$.  We choose this name because it is closely related to linear regression and regression depth \cites{Rousseeuw1999, Rousseeuw1999a, Amenta2000, Kreveld2008, Fulek2009}.

There are several Tverberg-type results for regression depth \cite{Karasev2011, Karasev2014, Schnider2023a}.  We are not aware of a Tverberg with tolerance theorem for regression depth.  We prove an upper bound which, up to a logarithmic factor in the tolerance, behaves the same way as our new bounds for Tverberg's theorem with tolerance.  Formally, let $N=N_\text{hyp}(t, d, r)$ be the smallest number of hyperplanes such that for every family $X$ of $N$ hyperplanes in $\rr^d$ there exists a partition $X_1,\dots,X_r$ of them into $r$ parts such that $\bigcap_{j=1}^r D(X_j \setminus C) \neq \emptyset$ for all subsets $C \subset X$ of at most $t$ hyperplanes.  Notice that $N_{\text{hyp}}(t,1,r) = N_{\pt}(t,1,r)$, so the open cases start with $d \ge 2$.

\begin{theorem}\label{thm:main-hyperplanes}
    Let $t,r,d$ be positive integers with $d\ge 2,\ r \ge 2$.  Then, 
\[
N_\text{hyp}(t, d, r) = rt + O_{d,r}\left(t^{1/2-1/(2d)}\sqrt{\log (t+1)}\right).
\]
\end{theorem}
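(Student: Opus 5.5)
The plan is to follow the discrepancy strategy announced in the abstract, adapted from points to hyperplanes. The steps are: a deterministic ``depth'' statement, a random-like balanced partition, and a discrepancy bound.

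\emph{Step 1 (depth).} I would first reduce the problem to a depth statement. For a family $X$ of hyperplanes and a point $p$, the regression depth of $p$ is the minimum, over closed rays $\rho$ from $p$, of the number of hyperplanes of $X$ that meet or are parallel to $\rho$. By the definition of $D(\cdot)$, we have $p\in D(X_j\setminus C)$ for every $|C|\le t$ exactly when every ray from $p$ meets at least $t+1$ hyperplanes of $X_j$. I would use the known centerpoint theorem for regression depth (Amenta--Bern--Eppstein--Teng; Mizera), which provides a point $p$ of regression depth at least $\lceil N/(d+1)\rceil$. This is too weak on its own, so instead I would project the problem onto ray ranges. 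Fix $p$ to be a deepest point of all of $X$, so that each ray from $p$ meets at least $N/(d+1)$ hyperplanes. It then suffices to find a partition such that, for every ray $\rho$ from $p$,
\[
\bigl|\,|X_j\cap \rho| - |X\cap\rho|/r\,\bigr| \le \Delta \quad\text{for all } j.
\]
If this holds, then each part meets $\rho$ in at least $N/(r(d+1))-\Delta$ hyperplanes. That is still not $t+1$ when $N\approx rt$, so the centerpoint idea must be replaced by a sharper topological or Tverberg-type argument, as in the points case: split off a small ``core'' Tverberg partition of regression depth (using Karasev's regression-depth Tverberg theorem), and handle the bulk by discrepancy. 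Concretely, I would take $N=rt+K$. I would use the Karasev-type result to guarantee a common point $p$ and ranges where each ray from $p$ already sees many hyperplanes. The bulk of $X$ is then colored with $r$ colors whose class sizes agree, up to $\Delta$, on every ray-range. This makes each class meet every ray from $p$ in at least $t+1$ hyperplanes once $K\gtrsim r\Delta$.

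\emph{Step 2 (range space).} Next I would identify the set system. It consists of the subsets of $X$ met by a closed ray from a point, with $p$ varying as well, since $p$ is not fixed in advance when we use a minimax or topological argument. In dual terms, where hyperplanes become points, a ray from $p$ meeting a hyperplane becomes a condition described by a constant number of halfspaces or wedges. So the range space has VC-dimension, or more precisely primal shatter function exponent, $O(d)$, with shatter function $O(N^{d})$ after a standard linearization. The Matoušek bound for halfspace-like systems with dual shatter function $O(m^{d})$ gives discrepancy $O(N^{1/2-1/(2d)})$ for halfspaces. For the ray ranges, which are intersections or unions of a bounded number of halfspace ranges and their parallel-degenerate cases, I expect the bound to acquire an extra $\sqrt{\log N}$ factor. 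This happens because one either passes through a union bound over the $O(N^{d})$ combinatorially distinct ranges combined with partial colorings, or uses the general packing-lemma bound with an extra log. Extending the two-color discrepancy to $r$ colors by recursive halving costs only a factor depending on $r$.

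\emph{Step 3 (assembly), and the main obstacle.} Setting $N=rt+C_{d,r}t^{1/2-1/(2d)}\sqrt{\log(t+1)}$ should then close the argument. The main obstacle is Step 1: showing rigorously that a low-discrepancy coloring of the ray range space, combined with a Tverberg/centerpoint point for regression depth, produces a common point of all $D(X_j\setminus C)$. The difficulty is that the common point must work simultaneously for all parts. I would mirror the points proof: first apply a regression-depth Tverberg theorem to a small subfamily to get the common point, then show that the discrepancy coloring of the rest preserves depth at that point. Rays parallel to hyperplanes count as degenerate ranges, and I would include them in the set system.
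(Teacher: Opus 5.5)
There is a genuine gap, and it is the one you flag yourself. Step 1 gives no valid mechanism for turning a balanced coloring into $\bigcap_{j=1}^r D(X_j\setminus C)\neq\emptyset$, and the mechanism you sketch cannot be repaired.

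Consider any version that fixes one point $p$ in advance, such as a deepest point or a point produced by a regression-depth Tverberg theorem on a small core. If each $X_j$ is then to keep depth $t+1$ at $p$, every closed ray from $p$ must meet at least $r(t+1)$ hyperplanes of $X$. This is impossible in general. Take a direction $v$ parallel to no hyperplane of $X$. Every hyperplane not through $p$ meets exactly one of the two rays $p+\rr_{\ge 0}v$ and $p-\rr_{\ge 0}v$. Hence the depth of $p$ is at most $(N+k)/2$, where $k$ is the number of hyperplanes through $p$. For a family in general position this is at most $(N+d)/2$. That is far below $r(t+1)$ when $N=rt+o(t)$, $r\ge 2$ and $t$ is large. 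So the common point must depend on $C$, exactly as in the point case. Your shortfall $N/(r(d+1))-\Delta$ is a symptom of this, not a loss that a better core could recover.

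The missing idea is the lift $\Phi(H(a,b))=(aa^{\top},ba)$ into the affine space $\mathcal{A}_d\subset\Sym_d\times\rr^d$, together with \cref{lem:technical-Mq}. That lemma says: if $(M,c)\in\conv\Phi(X)$, then $Mq=c$ is solvable and every solution lies in $D(X)$. The key identity is $\sum_i\lambda_i(v^{\top}a_i)(b_i-a_i^{\top}q)=v^{\top}(c-Mq)$. This reduces the problem to making $\Phi(X_1),\dots,\Phi(X_r)$ a Tverberg partition with tolerance $t$ of points in $\mathcal{A}_d$. There, the separation argument of \cref{lem:dev-tolerance} lets the common point move with $C$, and no centerpoint or Karasev-type theorem is needed. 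That argument produces arbitrary separating halfspaces of $\mathcal{A}_d$, so you need $r$-color discrepancy control over \emph{all} closed halfspaces on $\Phi(X)$. Your ray ranges are only the special family $v^{\top}(c-Mp)\ge 0$.

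The exponent $1/2-1/(2d)$ then comes from the \emph{dual} shatter function. On each of the $d$ charts $a_h=\max_i|a_i|$, a halfspace of $\mathcal{A}_d$ becomes the sign condition of a quadratic polynomial in $d$ variables. This gives $\pi^*(m)\le (A_0m)^d$ (\cref{lem:dual-shatter}), and \cref{thm:mww-expl} supplies the $\sqrt{\log}$ factor.

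Your Step 2 conflates the two sides. A linearization lands in $\mathcal{A}_d$ and gives primal exponent $d(d+3)/2-1$, not $d$. Matou\v{s}ek's primal bound therefore does not yield $1/2-1/(2d)$ here; the saving is visible only through the dual shatter function.
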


\begin{remark}
    The constants in the upper bounds can be taken to be polynomial in $d, r$. Similarly, the constant in the lower bound can be taken to be the reciprocal of a polynomial in $d, r$. Establishing this dependence requires careful tracking of constants in the discrepancy proofs of the results from \cites{Matousek1995, Matousek1993} used in our arguments.  We omit these details, as they amount to straightforward bookkeeping in previous papers.  It would add substantial length without introducing new ideas.
\end{remark}

The tools we use in our proofs are based on discrepancy theory.  Discrepancy measures the unavoidable error in approximating some distributions by finite samples or in satisfying many competing balancing constraints. It has proved useful in a wide range of topics, including combinatorics, discrete and computational geometry, number theory, and probability \cites{Beck1987, Matousek2010, Chazelle2000}.  The earlier advances that show $N_{\pt}(t,d,r) = rt + \tilde{O}_{d,r}(\sqrt{t})$ use probabilistic arguments.  The discrepancy theory bounds from Matou\v{s}ek \cite{Matousek1995} and from Matou\v{s}ek, Welzl, and Wernisch \cite{Matousek1993} allow us to improve the exponent in our results to $1/2-1/(2d)$.  See \cite{adibelli2026discrepancy} for recent progress in geometric discrepancy theory.

\section{Notation and results from discrepancy theory}

\subsection{Classic discrepancy theory}

We first introduce some notions and results from discrepancy theory that will be useful in our proofs.

A finite range space is a pair $(X,\Rc)$, where $X$ is a finite set and $\Rc \subseteq 2^X$ is a family of subsets of $X$.  We call the elements of $\Rc$ ranges.  For a subset $Y \subseteq X$, the induced range on $Y$ is $\Rc_Y = \{R \cap Y: R \in \Rc\}$.

A significant part of discrepancy theory deals with range spaces of bounded complexity.  A standard way to measure the complexity of a range space is by its shatter and dual shatter functions.

\begin{example}
    Let $X \subseteq \rr^d$ be a finite set.  The closed halfspaces of $\rr^d$ induce a range space on $X$ by defining \(\Rc_{\hs} = \{X \cap H: H \mbox{ is a closed halfspace of }\rr^d\}\).
\end{example}

Given a finite range space $(X,\Rc)$, its shatter function $\pi_{\Rc}: [|X|]\to \mathds{N}$ where 
\[
\pi_{\Rc}(m) = \max_{Y \in \binom{X}{m}} |\{Y \cap R : R \in \Rc \}|.
\]
In other words, it is the largest number of subsets of any $m$-element subset of $X$ that we can obtain by intersecting it with the ranges.  The VC dimension of $(X, \Rc)$ is the maximum $m$ such that $\pi_{\Rc}(m) = 2^m$.  In other words, the largest size of a set that we can shatter completely using $\Rc$.

For example, for $\Rc_{\hs}$ in $\rr^d$, we know that the maximum number of regions that $m$ hyperplanes split $\rr^d$ is bounded above by $2\sum_{j=0}^d \binom{m-1}{j}$ which implies $\pi_{\Rc_{\hs}}(m) \le (4m)^d$ \cite{Matousek2002}.  The VC dimension of $\Rc_{\hs}$ is at most $d+1$.

To define the dual shatter function, it is convenient to consider equivalence relations with respect to ranges.  Given a set of ranges, we say that $x, y \in X$ are equivalent if they are contained in the same set of ranges.  Then, the dual shatter function $\pi^*_{\Rc}: [|X|] \to \mathds{N}$ is the function such that $\pi^*_{\Rc}$ is the maximum possible number of equivalent classes in $X$ we obtain by choosing $m$ ranges $R_1,\dots,R_m$.  The dual VC dimension of the range $(X,\Rc)$ is the largest value $m$ for which $\pi^*_{\Rc}(m)=2^m$.  In other words, it is the largest possible number of ranges we can find such that, for any subset of those ranges, there is some $x \in X$ that is contained in those ranges and not contained in the rest.

Discrepancy measures the forced imbalances in range spaces.  In other words, the discrepancy of a range space should tell us how much imbalance is unavoidable whenever we color $X$ with two colors.  There are many notions of discrepancy.  We include below the most common ones.

\begin{definition}\label{def:two-disc}
    Suppose $(X, \Rc)$ is a finite range space, and let $\chi: X \to \{-1, +1\}$ be a two-coloring of $X$. The signed imbalance of a range $R \in \Rc$ is
    $$\chi(R) := \sum_{x \in R} \chi(x).$$

    \begin{itemize}
        \item The two-color discrepancy of $\Rc$ is
    $\displaystyle \disc(\Rc) := \min_{\chi} \max_{R\in\Rc} \lvert \chi(R) \rvert$.
        \item The hereditary discrepancy of $\Rc$ is $\herdisc(\Rc) := \max_{Y \subseteq X} \disc(\Rc_Y)$.
        \item The linear discrepancy of $\Rc$ is
    $\displaystyle \lindisc(\Rc) := \max_{w \in [0, 1]^X} \min_{z \in \{0, 1\}^X} \max_{R \in \Rc} \left\lvert \sum_{x \in R} (z_x - w_x) \right\rvert$.
    \end{itemize}
\end{definition}

An interpretation of linear discrepancy is that an adversary chooses weights $w_x \in [0,1]$ for all $x \in X$.  We wish to approximate all values $\sum_{x \in R} w_x$ by choosing instead boolean weights $z_x$.  The linear discrepancy of the system gives a lower bound for the maximum absolute value of the difference between the value we want to approximate and the value we obtain.  When $w_x = 1/2$ for all $x \in X$, we recover $1/2$ of the two-color discrepancy of the system.  In general, linear discrepancy is bounded as follows.

\begin{theorem}[Lov\'asz, Spencer, Vesztergombi 1986 \cite{Lovasz1986}]\label{thm:lsv}
    For any finite range space $(X, \Rc)$,
    \[
    %
    \frac{1}{2}\disc(\Rc) \leq \lindisc(\Rc) \leq \herdisc(\Rc).
    \]
\end{theorem}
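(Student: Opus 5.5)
The lower bound $\tfrac12\disc(\Rc)\le\lindisc(\Rc)$ is immediate: take the adversary's weight vector $w_x=1/2$ for every $x\in X$. For any $z\in\{0,1\}^X$, set $\chi(x)=2z_x-1\in\{-1,+1\}$. Then $\sum_{x\in R}(z_x-1/2)=\tfrac12\chi(R)$, so
\[
\lindisc(\Rc)\ \ge\ \min_z\max_R \left\lvert \sum_{x\in R}(z_x-\tfrac12)\right\rvert=\tfrac12\min_\chi\max_R|\chi(R)|=\tfrac12\disc(\Rc).
\]

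For the upper bound I will use the classical binary-expansion rounding argument. Fix $w\in[0,1]^X$ and write $h=\herdisc(\Rc)$. First assume every $w_x$ has a finite binary expansion with at most $k$ digits. I will show by induction on $k$ that some $z\in\{0,1\}^X$ satisfies $\max_R|\sum_{x\in R}(z_x-w_x)|\le h(1-2^{-k})$. For $k=0$ we have $w\in\{0,1\}^X$, and $z=w$ gives error $0$.

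For the inductive step, let $Y=\{x: \text{the $k$-th binary digit of } w_x \text{ is } 1\}$. Since $\disc(\Rc_Y)\le h$, there is a coloring $\chi:Y\to\{\pm1\}$ with $|\chi(R\cap Y)|\le h$ for all $R$. Define $w'_x=w_x+\chi(x)2^{-k}$ for $x\in Y$ and $w'_x=w_x$ otherwise. Each $w'_x$ has at most $k-1$ binary digits and still lies in $[0,1]$. For every range, $\left|\sum_{x\in R}(w'_x-w_x)\right|=2^{-k}|\chi(R\cap Y)|\le h2^{-k}$. The induction hypothesis gives $z$ with error at most $h(1-2^{-(k-1)})$ relative to $w'$. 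The triangle inequality then bounds the error relative to $w$ by
\[
h(1-2^{1-k})+h2^{-k}=h(1-2^{-k})\le h.
\]
The key point is that hereditary discrepancy lets us color the arbitrary subset $Y$, which changes at every stage.

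The main obstacle is passing to arbitrary real weights, and this is a routine limiting argument. Given $w\in[0,1]^X$, truncate each coordinate to $k$ binary digits to get $w^{(k)}$ with $|w_x-w^{(k)}_x|\le 2^{-k}$. The resulting $z^{(k)}$ then has error at most $h+|X|2^{-k}$ relative to $w$. Because $\{0,1\}^X$ is finite, some single $z$ occurs for infinitely many $k$. Letting $k\to\infty$ along those values gives error at most $h$ for that $z$. Taking the maximum over $w$ yields $\lindisc(\Rc)\le\herdisc(\Rc)$.
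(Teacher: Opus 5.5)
Your proof is correct. The choice $w\equiv\tfrac12$ gives $\tfrac12\disc(\Rc)\le\lindisc(\Rc)$. The digit-by-digit rounding then gives $\lindisc(\Rc)\le\herdisc(\Rc)$ with exactly the paper's normalization: you color the set $Y$ of points whose $k$-th binary digit is $1$ with discrepancy at most $\herdisc(\Rc)$, which keeps $w'_x\in[0,1]$ because $w_x=m2^{-k}$ with $m$ odd forces $1\le m\le 2^k-1$, and you finish with the finiteness argument over $\{0,1\}^X$.

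The paper does not prove this theorem; it only cites Lov\'asz--Spencer--Vesztergombi, and your argument is the classical proof from that source.
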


The discrepancy of a system can also be bounded in terms of its shatter function. In particular, Matou\v{s}ek proved the following bound \cite{Matousek1995}.

\begin{theorem}\label{thm:mat-expl}
    Let $A > 0$ and $s > 1$ be constants and let $(X, \Rc)$ be a finite range space with $n = |X|$. Suppose that $\pi_\Rc(m) \leq Am^s$ for all $1 \leq m \leq n$. Then
    $$\disc(\Rc) = O\left(n^{1/2 - 1/(2s)}\right).$$
\end{theorem}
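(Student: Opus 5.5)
The plan is to prove \cref{thm:mat-expl} with the partial coloring (entropy) method, combined with Haussler's packing lemma. The argument runs at many scales at once and then iterates over successive partial colorings.

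\emph{Step 1 (packing).} On $\Rc$ use the symmetric-difference metric $\rho(R,R')=|R\triangle R'|$. I would invoke Haussler's packing lemma: since $\pi_\Rc(m)\le Am^s$, every $\delta$-separated subfamily of $\Rc$ has size $O((n/\delta)^s)$. For each $i=0,1,\dots,\lceil\log_2 n\rceil$, set $\delta_i=n/2^i$. Choose a maximal $\delta_i$-separated family $\mathcal N_i\subseteq\Rc$, with $\mathcal N_{\lceil\log_2 n\rceil}=\Rc$ and $\mathcal N_0=\{\emptyset\}$. Each $R$ then has a chain $R_0,R_1,\dots,R_k=R$ with $R_i\in\mathcal N_i$ and $|R_{i}\triangle R_{i-1}|\le 2\delta_{i-1}$. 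Writing $\chi(R)=\sum_i\bigl(\chi(R_i\setminus R_{i-1})-\chi(R_{i-1}\setminus R_i)\bigr)$, we only need to control the auxiliary family $\mathcal F_i$ of sets $R_i\setminus R_{i-1}$ and $R_{i-1}\setminus R_i$. These sets have size $O(\delta_i)$, and there are at most $|\mathcal N_i||\mathcal N_{i-1}|=O((n/\delta_i)^{2s})$ of them. (Refining the pairing so that only the nearest $R_{i-1}$ is used gives $O((n/\delta_i)^{s})$ per level; either count suffices up to constants.)

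\emph{Step 2 (one partial coloring).} Apply the entropy method, in the form of Beck's partial coloring lemma. Assign to each set $F\in\mathcal F_i$ a target bound $\Delta_i$. If $\sum_i|\mathcal F_i|\,g(\Delta_i/\sqrt{\delta_i})\le n/5$, where $g(\lambda)\approx e^{-\lambda^2/9}$ for $\lambda\ge1$ and $g(\lambda)\approx\log(2/\lambda)$ for $\lambda<1$, then some partial coloring colors at least $n/10$ points and has $|\chi(F)|\le\Delta_i$ for every $F\in\mathcal F_i$. The critical scale is $\delta^*=n^{1-1/s}$, where $(n/\delta^*)^s\approx n$. I would choose
\[
\Delta_i=C\sqrt{\delta_i}\,\bigl(1+|\log_2(\delta_i/\delta^*)|\bigr)\cdot\beta^{-|\log_2(\delta_i/\delta^*)|}
\]
for a suitable $\beta>1$ and $C$. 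For $\delta_i<\delta^*$ there are many sets, but the Gaussian factor kills them, since $\sqrt{\delta_i}$ times a slowly growing factor dominates the count $\log$. For $\delta_i>\delta^*$ there are few sets, and the small $\log(2/\lambda)$ cost is summable. With these choices $\sum_i\Delta_i=O(\sqrt{\delta^*})=O(n^{1/2-1/(2s)})$, so the partial coloring has discrepancy $O(n^{1/2-1/(2s)})$ on every range via the chain decomposition.

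\emph{Step 3 (iteration).} Remove the colored points and repeat on the uncolored set $Y$, with $|Y|\le 9n/10$ and induced ranges $\Rc_Y$, which satisfy the same shatter bound. The discrepancies form a geometric series in $|Y|^{1/2-1/(2s)}$ because $s>1$, so the total remains $O(n^{1/2-1/(2s)})$.

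The main obstacle is Step 2: the parameter bookkeeping that makes the entropy sum at most $n/5$ while $\sum\Delta_i$ stays of order $\sqrt{\delta^*}$. The delicate regime is the levels just below $\delta^*$, where the counts $(n/\delta_i)^{s}$ exceed $n$ only polynomially. There the extra logarithmic factor in $\Delta_i$ must offset the counts, and the geometric decay must keep the sum bounded. I would first try $\Delta_i=C\sqrt{\delta_i}\,(1+j)\,2^{-j/4}$ with $j=|\log_2(\delta_i/\delta^*)|$ and check both tails explicitly.
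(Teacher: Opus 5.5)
The paper gives no proof of this statement. It cites Matou\v{s}ek's 1995 paper, and your architecture is essentially that argument: the entropy/partial-coloring method, the shatter-function form of Haussler's packing lemma, chaining over the scales $n/2^i$, and iteration on the uncolored points. The route is right, but two steps fail as written.

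First, your parenthetical claim that the crude count $|\mathcal N_i||\mathcal N_{i-1}|=O((n/\delta_i)^{2s})$ ``suffices up to constants'' is false. The exponent in the count fixes the critical scale. With exponent $2s$ the critical scale moves to $\delta\approx n^{1-1/(2s)}$, and the same bookkeeping yields only $O(n^{1/2-1/(4s)})$. You need the tree pairing:
\begin{itemize}
\item link each $P\in\mathcal N_i$ to its nearest element of $\mathcal N_{i-1}$, at distance at most $\delta_{i-1}$ by maximality;
\item build the chain of $R\in\mathcal N_k=\Rc$ by following parents, not by taking the element of each $\mathcal N_i$ nearest to $R$.
\end{itemize}
Then $|\mathcal F_i|\le 2|\mathcal N_i|=O((n/\delta_i)^s)$. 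The packing lemma must also be the shatter-function version. The VC-dimension version gives exponent equal to the VC dimension, which can exceed $s$ (for halfspaces it is $d+1$ versus $d$).

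Second, and more seriously, your $\Delta_i$ fails for every $\beta$. This is the check you postponed. Write $\lambda_i=\Delta_i/\sqrt{\delta_i}$; your formula is $\lambda_i=C(1+j)\beta^{-j}$ on both sides of $\delta^*$.

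On the fine side, $\delta_i=\delta^*2^{-j}$ and there can be up to about $n2^{js}$ sets. There $\lambda_i\to 0$, so $g(\lambda_i)$ stays bounded below, and the entropy sum is of order $n\sum_j 2^{js}\gg n/5$. Concretely, at the finest level you would demand $|\chi(F)|\le C\log n\cdot n^{-(1-1/s)\log_2\beta}<1$, i.e.\ $\chi(F)=0$, for up to $\sim n^s$ sets of size $O(1)$. Beating the count $2^{js}$ with $e^{-\lambda^2/9}$ needs $\lambda_i\gtrsim\sqrt{sj}$. So $\lambda_i$ must grow: your verbal description is right, but the formula contradicts it.

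On the coarse side, $\delta_i=\delta^*2^{j}$ and $\Delta_i=C\sqrt{\delta^*}(1+j)(\sqrt2/\beta)^{j}$. This is summable only if $\beta>\sqrt2$. Your trial value $\beta=2^{1/4}$ gives $\sum_i\Delta_i\approx n^{1/2-1/(4s)}\log n$.

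The fix is to put the decay on $\Delta_i$ measured in units of $\sqrt{\delta^*}$, not $\sqrt{\delta_i}$, for example $\Delta_i=C\sqrt{\delta^*}(1+j)^{-2}$:
\begin{itemize}
\item On the fine side, $\lambda_i=C2^{j/2}(1+j)^{-2}$, and $\exp(-C^2 2^{j}/(9(1+j)^4))$ beats $2^{js}$ once $C$ is large.
\item On the coarse side, $\lambda_i=C2^{-j/2}(1+j)^{-2}$ costs $g(\lambda_i)=O(1+j)$ per set. There are at most $\varepsilon n2^{-js}$ such sets, provided $\delta^*$ is taken a constant factor above $n^{1-1/s}$ so that $|\mathcal F_{i^*}|\le\varepsilon n$.
\end{itemize}
This gives $\sum_i\Delta_i=O(\sqrt{\delta^*})=O(n^{1/2-1/(2s)})$, and your Step 3 then completes the proof.
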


The inequality involving the primal shatter function is preserved under restriction, so the same bound holds for hereditary discrepancy. In particular, we have the following corollary

\begin{corollary}\label{cor:herdisc-halfspaces}
Let $d \ge 2$ be an integer and $X \subset \rr^d$ be a finite set, then,
\[
\herdisc (\Rc_{\hs}(X))\le C_d |X|^{1/2-1/(2d)},
\]
where $C_d$ is a constant depending only on $d$.
\end{corollary}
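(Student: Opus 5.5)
The plan is to apply Theorem~\ref{thm:mat-expl} with $s=d$ to every restricted range space $(Y,\Rc_{\hs}(X)_Y)$ with $Y\subseteq X$, and then take the maximum over $Y$.

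First I would note that restriction commutes with taking halfspace ranges. For $Y\subseteq X$, the induced system $\Rc_{\hs}(X)_Y=\{(X\cap H)\cap Y\}$ equals $\{Y\cap H : H \text{ a closed halfspace}\}=\Rc_{\hs}(Y)$. So it suffices to bound $\disc(\Rc_{\hs}(Y))$ uniformly for all finite $Y\subset\rr^d$.

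Next I would check the shatter hypothesis. For any $Z\subseteq Y$ with $|Z|=m$, the number of traces $Z\cap H$ is at most the bound for halfspaces quoted above, so $\pi_{\Rc_{\hs}(Y)}(m)\le (4m)^d=4^d m^d$. This is exactly the hypothesis of Theorem~\ref{thm:mat-expl} with $A=4^d$ and $s=d>1$, which is where $d\ge2$ is used. The theorem then gives $\disc(\Rc_{\hs}(Y))\le C\,|Y|^{1/2-1/(2d)}$.

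I expect the main subtle point to be uniformity of the constant. The $O(\cdot)$ in Theorem~\ref{thm:mat-expl} depends only on $A$ and $s$, not on the particular range space. Since here $A=4^d$ and $s=d$, the resulting constant $C_d$ is the same for every $Y$. Finally, $|Y|\le|X|$ and the exponent $1/2-1/(2d)$ is nonnegative, so $|Y|^{1/2-1/(2d)}\le |X|^{1/2-1/(2d)}$. Taking the maximum over $Y\subseteq X$ (the empty set contributes $0$) yields $\herdisc(\Rc_{\hs}(X))\le C_d|X|^{1/2-1/(2d)}$.
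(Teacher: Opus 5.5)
Your proposal is correct and follows the paper's argument. The paper notes that the primal shatter bound $\pi_{\Rc_{\hs}}(m)\le (4m)^d$ is preserved under restriction, so \cref{thm:mat-expl} with $s=d$ applies uniformly to every $Y\subseteq X$; you make explicit what the paper leaves implicit, namely that $\Rc_{\hs}(X)_Y=\Rc_{\hs}(Y)$, that the implied constant depends only on $A=4^d$ and $s=d$, and that $|Y|^{1/2-1/(2d)}\le |X|^{1/2-1/(2d)}$.
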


The bound $\herdisc (\Rc_{\hs}(X)) = O(|X|^{1/2-1/(2d)})$ was proved by Matou\v{s}ek \cite{Matousek1995}.

Similarly to \cref{thm:mat-expl}, if we know the order of growth of the dual shatter function of a range space, we can bound the order of growth of its discrepancy, as shown by Matou\v{s}ek, Welzl, and Wernisch \cite{Matousek1993}.

\begin{theorem}\label{thm:mww-expl}
    Let $A > 0$ and $s > 1$ be constants and let $(X, \Rc)$ be a finite range space with $n = |X|$. Suppose that $\pi_\Rc^*(m) \leq Am^s$ for all $1 \leq m \leq n$. Then
    $$\disc(\Rc) = O\left(n^{1/2 - 1/(2s)}\sqrt{\log n}\right).$$
\end{theorem}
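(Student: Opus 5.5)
The plan is the route of Matou\v{s}ek, Welzl and Wernisch: first build a \emph{perfect matching of low crossing number}, then color each matched pair with opposite colors, choosing the orientation of each pair at random. Say that a pair $\{x,y\}$ \emph{crosses} a range $R$ if $|R\cap\{x,y\}|=1$. The goal of the first stage is a matching $M$ of all but at most one point of $X$ in which every range $R\in\Rc$ crosses at most $\kappa = O(n^{1-1/s})$ pairs.

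Given such an $M$, the coloring works as follows. For each pair $\{x,y\}\in M$ we flip an independent fair coin and set $\chi(x)=-\chi(y)=\pm 1$. The possible unmatched point gets an arbitrary color.
\begin{itemize}
\item Pairs fully inside $R$ or fully outside $R$ contribute $0$ to $\chi(R)$.
\item Hence $\chi(R)$ is a sum of at most $\kappa+1$ independent $\pm1$ variables.
\item By Chernoff, $\Pr[|\chi(R)|>\lambda]\le 2e^{-\lambda^2/(2(\kappa+1))}$.
\end{itemize}
Only the number $|\Rc_X|$ of distinct traces matters for the union bound. A dual shatter bound $\pi^*_\Rc(m)\le Am^s$ caps the dual VC dimension by a constant $d^*$. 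The primal VC dimension is then below $2^{d^*+1}$, so by Sauer--Shelah $|\Rc_X|\le n^{O(1)}$. Taking $\lambda = C\sqrt{\kappa\log n}$ and applying the union bound gives a coloring with $\max_R|\chi(R)| = O(n^{1/2-1/(2s)}\sqrt{\log n})$, which is the claim.

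The main work is producing the matching, and I would use the iterative reweighting scheme of Welzl and Chazelle--Welzl. Start with every range of $\Rc$ having weight $1$. Repeat the following on the current unmatched set $P$ with $|P|=k$:
\begin{enumerate}
\item Find two points $x,y\in P$ whose separating ranges have total weight at most $O(W/k^{1/s})$, where $W$ is the current total weight.
\item Match $x$ and $y$, remove them from $P$.
\item Double the weight of every range that separates $x$ and $y$.
\end{enumerate}
Step 1 always gives $W_{\mathrm{new}}\le W(1+O(k^{-1/s}))$. Summing over $k=n,n-2,\dots$ gives a final weight $W_{\mathrm{final}}\le |\Rc|\exp(O(n^{1-1/s}))$. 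A range crossed by $c$ matching edges has final weight $2^c$, so $c\le \log_2 |\Rc| + O(n^{1-1/s})$. This is $O(n^{1-1/s})$ once we use $\log|\Rc| = O(\log n)$, which holds since $s>1$.

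The step I expect to be the main obstacle is the packing claim in step 1. Among any $k$ points, with ranges carrying arbitrary weights of total $W$, some pair must be separated by weight $O(W k^{-1/s})$.
\begin{itemize}
\item \emph{First attempt (random sampling).} Draw $m$ ranges at random proportionally to weight. They cut $P$ into at most $Am^s$ cells, so with $m\approx (k/2A)^{1/s}$ at least $k/2$ pairs lie in common cells. A pair separated by weight fraction $\delta$ survives all $m$ draws with probability $(1-\delta)^m$. Averaging yields a pair with $\delta = O(\log k / m)$.
\item \emph{Sharper version.} This loses a $\log$, which would become an extra $\sqrt{\log}$ factor in $\kappa$ and an extra $\log^{1/2}$ in the final bound. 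To avoid it I would instead invoke Haussler's packing lemma, in its form for the dual system (points as sets of ranges, weighted symmetric-difference metric). A shatter function of polynomial order $s$ gives $\delta$-separated sets of size $O(\delta^{-s})$. Choosing $\delta = Ck^{-1/s}$ forces two of the $k$ points within weighted distance $\delta W$, which gives step 1 without the logarithm.
\end{itemize}
If Haussler's lemma is unavailable, the random-sampling version still gives a bound within a polylogarithmic factor. The final $\sqrt{\log n}$ in the theorem comes only from the union bound in the coloring stage.
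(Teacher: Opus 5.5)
Your proposal is correct and follows the standard proof of this result. The paper gives no proof and only cites Matou\v{s}ek--Welzl--Wernisch; your argument is the one behind that citation: a matching with crossing number $O(n^{1-1/s})$ built by iterative reweighting, the weighted packing lemma on the dual system to find the close pair, then random orientation of matched pairs with a Chernoff and union bound over the $n^{O(1)}$ traces. One small slip in your side remark: the random-sampling variant costs a full $\log$ factor in $\kappa$ (not $\sqrt{\log}$), and that is what produces the extra $\sqrt{\log n}$ in the final discrepancy bound.
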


As above, the inequality involving the dual shatter function is preserved under restriction, so the same bound holds for hereditary discrepancy.

\subsection{Colorful discrepancy}

For our purposes, given a partition of the ground set into $r$ sets, we need to be able to measure how unbalanced a range is, even if $r>2$.  This is measured by multicolor discrepancy \cite{Doerr2003, Doerr2010}.

\begin{definition}
    If $X = X_1 \sqcup \cdots \sqcup X_r$ is an $r$-partition of the ground set of a finite range space $(X, \Rc)$, let $\chi: X \to [r]$ be the corresponding coloring, so that $X_i = \chi^{-1}(i)$. Its $r$-color discrepancy is
    \[
    \disc(\Rc, \chi, r) := \max_{\substack{R \in \Rc \\ i \in [r]}} \left\lvert \lvert R \cap \chi^{-1}(i) \rvert - \frac{\lvert R \rvert}{r} \right\rvert.
    \]
\end{definition}

We will also use a very similar parameter, which we call the $r$-color spread.

\begin{definition}
    Let $r$ be a positive integer and $(X,\Rc)$ be a range space.  If $X = X_1 \sqcup \cdots \sqcup X_r$ is a partition of $X$ with corresponding coloring $\chi: X \to [r]$, we define its $r$-color spread with respect to $\Rc$ to be
    \[
    \spr(\Rc, \chi, r) := \max_{\substack{R \in \Rc \\ i, j \in [r]}} \lvert \lvert R \cap \chi^{-1}(i) \rvert - \lvert R \cap \chi^{-1}(j) \rvert \rvert.
    \]
\end{definition}

These two notions are equivalent up to a factor of $2$.  Namely, if $\chi$ is the coloring corresponding to the partition $X = X_1 \sqcup \cdots \sqcup X_r$, then $\disc(\Rc, \chi, r) \le \spr(\Rc, \chi, r) \le 2 \cdot \disc(\Rc, \chi, r)$.   The most important bound from colorful discrepancy is that we can bound the $r$-color discrepancy in a range space in terms of its hereditary discrepancy.

\begin{theorem}[Doerr, Fouz 2010 \cite{Doerr2010}*{Thm 2.5, 2.6}]\label{lem:two-r-color}
    Let $r$ be a positive integer and let $(X, \Rc)$ be a finite range space. There is a partition $X = X_1 \sqcup \cdots \sqcup X_r$ with corresponding coloring $\chi: X \to [r]$ satisfying
    $$\disc(\Rc, \chi, r) \leq 2.0005\cdot \herdisc(\Rc).$$
\end{theorem}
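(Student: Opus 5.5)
The plan is to build the $r$-partition by recursive splitting along a balanced binary tree on the color set $[r]$, using linear discrepancy at each split. \cref{thm:lsv} lets us pay at most $\herdisc(\Rc)$ per split, and the errors will then be shown to decay geometrically along any root-to-leaf path.

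\textbf{The splitting step.} Start with the root node, labelled by the color set $S=[r]$ and the point set $Y_S=X$. At a node with color set $S$, $|S|=s\ge 2$, and point set $Y_S$:
\begin{itemize}
\item Split $S=S_1\sqcup S_2$ with $|S_1|=\lfloor s/2\rfloor$ and $|S_2|=\lceil s/2\rceil$.
\item Apply the definition of linear discrepancy to the induced range space $\Rc_{Y_S}$ with the constant weight $w_x=|S_1|/s$ for $x\in Y_S$.
\item By \cref{thm:lsv}, and since $\herdisc(\Rc_{Y_S})\le\herdisc(\Rc)$, this gives $z\in\{0,1\}^{Y_S}$. Set $Y_{S_1}=z^{-1}(1)$ and $Y_{S_2}=z^{-1}(0)$.
\end{itemize}
For every $R\in\Rc$ and $j=1,2$ we then have
\[
\Bigl|\,|R\cap Y_{S_j}|-\tfrac{|S_j|}{s}|R\cap Y_S|\,\Bigr|\le \herdisc(\Rc).
\]
The bound for $j=2$ follows from the one for $j=1$ by complementation. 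Recursing down to singletons $S=\{i\}$ defines $X_i=Y_{\{i\}}$.

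\textbf{Telescoping along a path.} Fix a color $i$ and the chain $[r]=S_0\supset S_1\supset\cdots\supset S_m=\{i\}$ of nodes containing $i$. Divide the displayed inequality by $|S_{k+1}|$:
\[
\left|\frac{|R\cap Y_{S_{k+1}}|}{|S_{k+1}|}-\frac{|R\cap Y_{S_k}|}{|S_k|}\right|\le \frac{\herdisc(\Rc)}{|S_{k+1}|}.
\]
Summing over $k$ telescopes to
\[
\Bigl|\,|R\cap X_i|-\tfrac{|R|}{r}\Bigr|\le \herdisc(\Rc)\sum_{k=1}^{m}\frac{1}{|S_k|}.
\]

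\textbf{Bounding the constant.} This is the delicate point. A child of a node of size $s$ has size $\lfloor s/2\rfloor$ or $\lceil s/2\rceil$, so $|S_{k-1}|\ge 2|S_k|-1$. Reading upward from the leaf, the sizes therefore dominate $1,2,3,5,9,17,\dots$, that is $2^{j}+1$ after the first step. The sum is then at most
\[
1+\tfrac12+\sum_{j\ge1}\frac{1}{2^j+1}\approx 2.26.
\]
The pure halving case ($r$ a power of $2$) gives $\sum 2^{-j}<2$.

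Reaching $2.0005$ requires a sharper choice of splits. I would first try to redistribute the rounding imbalance so that small odd sizes such as $3$ never occur as the parent of a singleton. If that fails, I would do the first split with weight $\lfloor r/2\rfloor/r$ as above, but handle the final levels by an exact optimization over the few possible small subtrees, and check numerically that the worst path sum is at most $2.0005$. This bookkeeping of the constant is the step I expect to be the main obstacle. The qualitative statement, $\disc(\Rc,\chi,r)=O(\herdisc(\Rc))$ with an absolute constant, already follows from the argument above, and that is all our later applications use.
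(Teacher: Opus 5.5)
\textbf{The gap: your argument proves a weaker constant than the one stated.}

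Your recursive splitting is sound as far as it goes. Each split is legitimately controlled by \cref{thm:lsv} applied to $\Rc_{Y_S}$, and the telescoping is correct. What this establishes, however, is only
\[
\disc(\Rc,\chi,r)\le \Bigl(\tfrac32+\sum_{j\ge1}\tfrac{1}{2^j+1}\Bigr)\herdisc(\Rc)\approx 2.26\,\herdisc(\Rc).
\]
Your own accounting already exceeds $2.0005$ for small $r$. For $r=9$, the path $9\to5\to3\to2\to1$ gives $\tfrac15+\tfrac13+\tfrac12+1\approx 2.03$. The final paragraph, which is meant to bring the constant down to $2.0005$, is only a list of things you might try, so the theorem as stated is not proved. (The paper itself gives no proof; it imports the bound from Doerr and Fouz.)

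Your proposed rebalancing heuristics also aim at the wrong place. With cost $\herdisc(\Rc)$ charged at every split, even the power-of-two case lands only $2/r$ below $2$. That leaves essentially no slack for reshuffling odd sizes to exploit.

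\textbf{The missing idea: you overpay at every split where $|S|$ is even.}

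There $|S_1|/s=1/2$, so you do not need \cref{thm:lsv}. Take a two-coloring $\chi_S$ of $Y_S$ with $\max_R|\chi_S(R\cap Y_S)|\le\disc(\Rc_{Y_S})\le\herdisc(\Rc)$, and set $Y_{S_1}=\chi_S^{-1}(+1)$. Then $|R\cap Y_{S_1}|-\tfrac12|R\cap Y_S|=\tfrac12\chi_S(R\cap Y_S)$, so this split costs $\herdisc(\Rc)/2$ rather than $\herdisc(\Rc)$.

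Write $h=\herdisc(\Rc)$. A split now costs $h/2$ at an even parent and $h$ at an odd parent. So the step into a non-leaf $S_k$ contributes at most $h/|S_k|$, and the root contributes nothing. For the leaf step there are two cases.
\begin{itemize}
\item If the leaf's parent is odd, it has size at least $3$ and the leaf step costs $h$.
\item If the leaf's parent is even, it has size exactly $2$. The leaf step then costs $h/2$, and the step into that size-$2$ node costs at most $h/2$.
\end{itemize}
Using $|S_{k-1}|\ge 2|S_k|-1$ going upward, in either case the total along any root-to-leaf path is at most
\[
h\Bigl(1+\tfrac13+\tfrac15+\tfrac19+\cdots\Bigr)=h\Bigl(1+\sum_{j\ge1}\tfrac{1}{2^j+1}\Bigr)\approx 1.77\,h.
\]
This is below $2.0005\,h$ with room to spare and needs no case-by-case optimization. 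With this one change, your balanced-tree construction proves the statement as written.
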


The careful reader might notice that there is a missing factor $2$ in the Doerr--Fouz statement.  This is due to our normalization.

\section{Tverberg with tolerance}

\begin{lemma}\label{lem:dev-tolerance}
    Let $X \subseteq \rr^d$  be a finite set of points and let $X = X_1 \sqcup \cdots \sqcup X_r$ be a partition of $X$ with corresponding coloring $\chi: X \to [r]$. Let
    \(\Delta = \disc(\Rc_{\hs} (X), \chi, r)\).  Then, if $|X| > r^2 \Delta$, we have that $(X_1,\dots, X_r)$ is a Tverberg partition with tolerance greater than or equal to \( \left\lceil \frac{|X|}r - r\Delta \right\rceil - 1\).
\end{lemma}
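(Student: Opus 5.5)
Set $n=|X|$ and $t=\lceil n/r - r\Delta\rceil - 1$. The plan is to produce a single point $p$ that works for every removal set, and to certify $p\in\conv(X_j\setminus C)$ through the halfspace criterion recalled in the introduction. The natural candidate is a point of large Tukey depth. By the centerpoint theorem there is $p\in\rr^d$ such that every closed halfspace containing $p$ contains at least $n/(d+1)$ points of $X$. A centerpoint only gives depth $n/(d+1)$, though, and we need depth about $rt$. So instead I will let the discrepancy bound do the work and derive the depth requirement from the partition itself.

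The key observation is as follows. Fix a closed halfspace $H$ and let $R=H\cap X\in\Rc_{\hs}(X)$. The definition of $\Delta$ gives
\[
\bigl|\,|H\cap X_j| - |H\cap X|/r\,\bigr|\le\Delta \quad\text{for every } j.
\]
Now fix $C$ with $|C|\le t$. The point $p$ fails to lie in $\conv(X_j\setminus C)$ exactly when some closed halfspace $H\ni p$ has $H\cap X_j\subseteq C$, which forces $|H\cap X_j|\le t$. By the inequality above this means $|H\cap X|\le r(t+\Delta)$. It therefore suffices to find $p$ such that every closed halfspace containing $p$ contains more than $r(t+\Delta)$ points of $X$.

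Next I need to pick $p$. Take $H_0$ to be a closed halfspace whose intersection with $X$ is small; the hypothesis $n>r^2\Delta$ is what should make the construction consistent. Define $p$ to lie in the intersection of all closed halfspaces $H$ with $|H\cap X|\ge n-r(t+\Delta)$. Equivalently, $p$ must avoid every open halfspace containing at most $r(t+\Delta)$ points. If that intersection is nonempty, every closed halfspace through $p$ meets $X$ in more than $r(t+\Delta)$ points, which is exactly what we need. Nonemptiness is the main obstacle. Helly's theorem only yields depth $n/(d+1)$, which is far too weak. So I would instead exploit the partition directly and avoid asking for a deep point of all of $X$.

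The refined argument runs as follows. Consider the sets $K_j=\bigcap\{H \text{ closed halfspace}: |X_j\setminus H|\le t\}$, i.e. the regions of Tukey depth $>t$ with respect to $X_j$. A point $p$ lies in $\conv(X_j\setminus C)$ for all $|C|\le t$ exactly when $p$ has depth $\ge t+1$ in $X_j$. So we need $\bigcap_j K_j\neq\emptyset$. The intersection of all the defining halfspaces over all $j$ is empty only if some such halfspaces have empty common intersection. That empty intersection can be witnessed by finitely many of them, and hence, arguing with a separating direction and the tightest halfspaces, by an open halfspace $G$ such that $G\cap X_j$ is small for one color while the complementary side is small for another color. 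I expect this reduction to be the technical core.

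Once the reduction is in place, the discrepancy bound finishes the argument. Suppose two opposite closed halfspaces $H$ and $H'$ with $H\cup H'=\rr^d$ satisfy $|X_i\cap H|\le t$ and $|X_j\cap H'|\le t$. Then
\[
|X\cap H|\le r(t+\Delta) \quad\text{and}\quad |X\cap H'|\le r(t+\Delta).
\]
Adding these gives $n\le 2r(t+\Delta)$, which by itself is not a contradiction. The sharper use is to compare colors: $|X_i\cap H|\ge |X_j\cap H|-2\Delta\ge |X_j|-t-2\Delta$. Combined with $|X_j|\ge n/r-\Delta$, this gives $t\ge n/r - t-3\Delta$, contradicting $t<n/r-r\Delta$ once the constants are tracked, with $r\Delta$ absorbing the accumulated $\Delta$ terms in general position arrangements involving up to $r$ colors. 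The condition $n>r^2\Delta$ guarantees $t\ge 0$.
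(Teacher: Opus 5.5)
There is a genuine gap, and it sits in your first design choice. You look for a single point $p$ lying in $\conv(X_j\setminus C)$ for every $j$ and every $|C|\le t$, that is, a point of depth at least $t+1$ in every $X_j$. No such point exists in the regime the lemma addresses.

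Take a hyperplane through $p$ that misses the other points of $X_j$. One of its closed sides contains $p$ and meets $X_j$ in at most $(|X_j|+1)/2$ points. Moreover $|X_j|\le n/r+\Delta$ (use the range $X$ itself), while $t+1\ge n/r-r\Delta$. Hence every $K_j$ is empty as soon as $n>r\bigl((2r+1)\Delta+1\bigr)$. For example, $n$ points on a line colored alternately with $r=2$ have $\Delta=1/2$ and $t=n/2-2$, yet no point has depth above $n/4+1/2$ in either class.

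The underlying reason is that $|X_j|-t\le (r+1)\Delta+1$. A removal set can strip a class down to a handful of points, and different removal sets leave disjoint hulls, so the common point must depend on $C$. Your final computation shows the failure: $|X_i\cap H|\le t$ and $|X_j\cap H'|\le t$ only yield $2t\ge n/r-3\Delta$, which is perfectly consistent with $t<n/r-r\Delta$. It excludes tolerance of order $n/(2r)$, not $n/r$, and no bookkeeping of $\Delta$-terms recovers a factor $2$ in the main term. Separately, for $r\ge 3$ an empty intersection need not be witnessed by one pair of opposite halfspaces, since three convex sets can pairwise meet with no common point.

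The missing idea is to fix $C$ first and charge all losses to that single set. Suppose $\bigcap_j\conv(X_j\setminus C)=\emptyset$. Choose open halfspaces $H_j\supseteq X_j\setminus C$ with $\bigcap_j H_j=\emptyset$. Their closed complements $G_j$ cover $\rr^d$, so $\sum_j|X\cap G_j|\ge n$. The discrepancy bound gives $|X_j\cap G_j|\ge |X\cap G_j|/r-\Delta$. The sets $X_j\cap G_j$ are pairwise disjoint subsets of $C$, so summing gives $|C|\ge n/r-r\Delta$.

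This is the paper's argument. The gain comes from adding the $r$ pieces against one budget $|C|$ instead of bounding each piece by $t$ separately, which is what your approach does.
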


\begin{proof}
    Let $C \subseteq X$ be a set such that $\bigcap_{j=1}^r \conv(X_j \setminus C) = \emptyset$.  We want to show that $|C| \ge \frac{|X|}{r}-r\Delta$.  Since the convex sets have empty intersection, there exist open halfspaces $H_1,\dots,H_r$ such that for all $j \in [r]$, we have $X_j \setminus C \subseteq H_j$ and $\bigcap_{j=1}^r H_j = \emptyset$.

    For each $j \in [r]$, let $G_j = \rr^d \setminus H_j$.  Note that since each point in $\rr^d$ is missing from at least one $H_j$, we have $\sum_{j=1}^r|X\cap G_j| \ge |X|$.

    By the discrepancy hypothesis, we have that for each $j$, $|X_j \cap G_j| \ge \frac{|X\cap G_j|}{r}-\Delta$.  Adding these inequalities together gives
    \[
    \sum_{j=1}^r |X_j \cap G_j| \ge \frac{1}{r}\sum_{j=1}^r|X\cap G_j| - r\Delta \ge \frac{|X|}{r}- r\Delta.
    \]
    By construction, $X_j \cap G_j \subseteq C$.  Moreover, since $X_1,\dots,X_r$ is a partition, every point of $C$ is counted at most once.  This implies $|C| \ge \frac{|X|}{r}-r\Delta$, as we wanted to show.
\end{proof}

With this, we can prove the upper bound for \cref{thm:main-points}.

\begin{theorem}\label{thm:point-bound}
    For positive integers $t, d, r$ such that $d \ge 2$, we have
    \[
    N_{\pt}(t, d, r) \leq rt + C_{d, r}t^{1/2 - 1/(2d)},
    \]
    where $C_{d, r}$ is a constant depending only on $d$ and $r$.
\end{theorem}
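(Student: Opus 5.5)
The plan is to combine \cref{lem:dev-tolerance} with the colorful discrepancy bound \cref{lem:two-r-color} and the hereditary discrepancy bound for halfspaces in \cref{cor:herdisc-halfspaces}. Fix $N = rt + \lceil C t^{1/2-1/(2d)}\rceil$ for a constant $C = C_{d,r}$ to be chosen, and let $X \subset \rr^d$ be any set of $N$ points. If the points are not distinct we treat $X$ as a multiset; alternatively we perturb slightly, noting that Tverberg partitions with tolerance are stable under limits by compactness. By \cref{lem:two-r-color} there is a coloring $\chi: X \to [r]$ with
\[
\Delta := \disc(\Rc_{\hs}(X),\chi,r) \le 2.0005\,\herdisc(\Rc_{\hs}(X)) \le 2.0005\, C_d N^{1/2-1/(2d)}.
\]
Write $\alpha = 1/2-1/(2d)$, so $0<\alpha<1/2$.

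By \cref{lem:dev-tolerance}, provided $N > r^2\Delta$, this partition has tolerance at least $\lceil N/r - r\Delta\rceil - 1$. We need this to be at least $t$, and it suffices that $N/r - r\Delta > t$, i.e.\ $N - rt > r^2\Delta$. This condition also implies $N > r^2\Delta$. So the whole proof reduces to checking
\[
C t^{\alpha} > 2.0005\, r^2 C_d \,(rt + C t^{\alpha}+1)^{\alpha}.
\]
Since $\alpha<1$, the right-hand side is at most $2.0005\, r^2 C_d\,(r+C+1)^{\alpha} t^{\alpha}$ for $t\ge1$. It is therefore enough to have $C > 2.0005\, r^2 C_d (r+C+1)^{\alpha}$. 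Because $\alpha<1/2$, the left side grows linearly in $C$ while the right side grows sublinearly, so a sufficiently large $C$ depending only on $d,r$ works. For instance, $C$ of order $(r^2C_d)^{1/(1-\alpha)}$ plus a multiple of $r$ suffices.

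The only delicate point is this self-referential dependence of $\Delta$ on $N$ itself, which includes the excess term we are trying to bound. It is handled by the sublinearity argument above. A minor secondary issue is degenerate or repeated points. The discrepancy statements apply verbatim to multisets, since the range-space framework and the shatter bound $\pi(m)\le(4m)^d$ do not need distinctness. I would therefore simply remark that the argument holds for multisets rather than perturb. For small $t$ the bound is trivially consistent because $C$ is large, so no separate case analysis is needed.
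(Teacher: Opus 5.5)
Your proposal is correct and is essentially the paper's proof: combine \cref{lem:two-r-color} with \cref{cor:herdisc-halfspaces} to get $\Delta = O_{d,r}(N^{1/2-1/(2d)})$, apply \cref{lem:dev-tolerance}, and absorb the self-referential $N^{\alpha}$ term by sublinearity in the constant. The only difference is that you use $N \le (r+C+1)t$ where the paper uses subadditivity of $x^{\alpha}$ and $t^{\alpha^2}\le t^{\alpha}$, and you check the hypothesis $N > r^2\Delta$ and the ceiling condition explicitly, which the paper leaves implicit; the multiset/perturbation remark is unnecessary since $N_{\pt}$ is defined for sets of points.
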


\begin{proof}
    Let $X$ be a set of $N$ points in $\rr^d$.  By \cref{lem:two-r-color} and \cref{cor:herdisc-halfspaces}, there exists a partition $X_1 \sqcup \dots \sqcup X_r$, with corresponding coloring $\chi: X \to [r]$, such that 
    \[
    \Delta = \disc(\Rc_{\hs}(X), \chi, r) \le 2.0005 \cdot \herdisc(\Rc_{\hs}(X)) \le C_{d, r} N^{1/2-1/(2d)}.
    \]
    By \cref{lem:dev-tolerance}, the partition $(X_1,\dots, X_r)$ is a partition with tolerance greater than or equal to
    \[
    \frac{N}{r}-rC_{d, r} N^{1/2-1/(2d)}-1.
    \]

    Therefore, it is sufficient to show that for some $K_{d, r}>0$, if we choose $E = K_{d, r} t^{1/2-1/{2d}}$ and $N = \lceil rt + E\rceil$, that $E/r \ge rC_{d, r} N^{1/2-1/{2d}}+1$.

    Take $\alpha = 1/2 - 1/(2d) < 1/2$.  Since $x^{\alpha}$ is a concave function, we have $N^{\alpha}\le (rt)^{\alpha} + E^{\alpha}+1$.  If we multiply by $rC_{d, r}$, we have
    \[
    rC_{d, r} N^{\alpha} \le rC_{d, r}(rt)^\alpha + rC_{d, r} E^\alpha + rC_{d, r}.
    \]
    Since $t^{\alpha^2} \leq t^\alpha$ for $t \geq 1$ and $\alpha < 1$, we can choose $K_{d, r}$ sufficiently large, depending only on $d$ and $r$, such that
    \[
    rC_{d, r} N^{\alpha} \le rC_{d, r}(rt)^\alpha + rC_{d, r} E^\alpha + rC_{d, r} \le \frac{E}{3r}+\frac{E}{3r}+\left(\frac{E}{3r}-1\right),
    \]
    which finishes the proof.
\end{proof}

\section{Tverberg for hyperplanes with tolerance}

To establish our results for Tverberg with tolerance for hyperplanes, we use a particular parametrization of hyperplanes.  For convenience, we consider the vectors of $\rr^d$ as $d \times 1$ matrices, so that the dot product $\langle x,y\rangle$ can be written as $x^{\top}y$.

Given a hyperplane $H$ in $\rr^d$, we can write it as $H=H(a,b) = \{x \in \rr^d: a^{\top}x=b\}$, where $\|a\| = 1$.  We have $H(a,b) = H(-a,-b)$, and the parametrization is unique up to the sign change.  Let $\Sym_d$ be the space of all $d\times d$ symmetric matrices and let $\Hc_d$ be the space of all hyperplanes in $\rr^d$. Consider the lift

\begin{align*}
    \Phi : \Hc_d & \hooklongrightarrow \Sym_d \times \rr^d \\
    H(a,b)& \longmapsto (aa^{\top}, ba).
\end{align*}

This is well defined since the two ways to parametrize $H$ give us the same image.  The map $\Phi$ is injective.  The matrix $aa^{\top}$ is the matrix corresponding to the orthogonal projection onto $\operatorname{span}(a)$, which uniquely determines $a$ up to sign.  Since $\tr(aa^{\top})=1$ when $a$ is a unit vector, the image of $\Phi$ is contained in an affine space $\mathcal{A}_d \subseteq \Sym_d \times \rr^d$ of dimension $\frac{d(d+3)}{2}-1$.

This lift translates Tverberg-type properties of families of points to the corresponding properties for families of hyperplanes.  Namely, we have the following lemma and corollary.

\begin{lemma}\label{lem:technical-Mq}
    Let $d$ be a positive integer and $X$ be a finite family of hyperplanes in $\rr^d$.  If $(M,c) \in \conv (\Phi(X))$, then there exists $q \in \rr^d$ such that $Mq=c$.  Moreover, every $q$ that satisfies this property also satisfies $q \in D(X)$.
\end{lemma}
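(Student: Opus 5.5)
Write $X=\{H(a_1,b_1),\dots,H(a_n,b_n)\}$ with $\|a_i\|=1$, so that $(M,c)\in\conv(\Phi(X))$ means $M=\sum_i\lambda_i a_ia_i^{\top}$ and $c=\sum_i\lambda_i b_ia_i$ for some convex weights $\lambda_i\ge 0$, $\sum_i\lambda_i=1$. The proof has two parts: solvability of $Mq=c$, which is linear algebra, and the ray condition, which is an averaging argument.

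\emph{Existence of $q$.} Since $M$ is symmetric positive semidefinite, $\im M=(\ker M)^{\perp}$. If $v\in\ker M$, then
\[
0=v^{\top}Mv=\sum_i\lambda_i (a_i^{\top}v)^2 ,
\]
so $a_i^{\top}v=0$ whenever $\lambda_i>0$. Hence $v^{\top}c=\sum_i\lambda_i b_i a_i^{\top}v=0$. Thus $c\perp\ker M$, so $c\in\im M$, and some $q$ satisfies $Mq=c$.

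\emph{$q\in D(X)$.} Let $q$ satisfy $Mq=c$ and fix a direction $u\neq 0$; consider the closed ray $\{q+su:s\ge 0\}$. Note that $\sum_i\lambda_i a_i(a_i^{\top}q-b_i)=Mq-c=0$, and taking the inner product with $u$ gives
\[
\sum_i\lambda_i (a_i^{\top}u)(a_i^{\top}q-b_i)=0 .
\]
The ray is parallel to $H_i$ when $a_i^{\top}u=0$, and otherwise it meets $H_i$ exactly when $s=(b_i-a_i^{\top}q)/(a_i^{\top}u)\ge 0$, i.e.\ when $(a_i^{\top}u)(a_i^{\top}q-b_i)\le 0$.

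Suppose for contradiction that the ray neither meets nor is parallel to any hyperplane of $X$. Then every $i$ has $a_i^{\top}u\neq0$ and $(a_i^{\top}u)(a_i^{\top}q-b_i)>0$. Since some $\lambda_i>0$, the weighted sum above is strictly positive, a contradiction. So every closed ray from $q$ meets or is parallel to a hyperplane of $X$, and $q\in D(X)$.

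\emph{Main obstacle.} The only subtle step is the existence of $q$ when $M$ is singular, which the kernel argument handles. The ray step needs only the observation that the identity $Mq=c$ forces the signed quantities to average to zero.
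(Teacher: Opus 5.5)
Your proof is correct and follows essentially the same route as the paper. Both use the identity $y^{\top}My=\sum_i\lambda_i(a_i^{\top}y)^2$ to get $c\perp\ker M$ and hence $c\in\im M$, and both use $\sum_i\lambda_i(a_i^{\top}u)(a_i^{\top}q-b_i)=u^{\top}(Mq-c)=0$ to contradict a ray that misses, and is not parallel to, every hyperplane.
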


\begin{proof}
    Let $X= \{H_1,\dots,H_n\}$.  For each $i\in [n]$, let $H_i = H(a_i,b_i)$ with $\|a_i\|=1$.  Since $(M,c) \in \conv (\Phi (X))$, we can find coefficients $\lambda_i$ for $i\in [n]$ of a convex combination such that $M = \sum_{i=1}^n \lambda_i a_ia_i^{\top}$ and $c = \sum_{i=1}^n\lambda_i b_i a_i$.

    Let us show that $c \in (\ker M)^{\perp}= \im M$.  If $y \in \ker M$, then
    \[
    0 = y^{\top}M y = \sum_{i=1}^n \lambda_i y^{\top} a_ia_i^{\top}y = \sum_{i=1}^n \lambda_i \langle a_i, y\rangle^2.
    \]
    Since each term is non-negative, for each $i \in [n]$ we must have either $\lambda_i = 0$ or $\langle a_i, y \rangle = 0$.  This implies that
    \[
    c^{\top}y = \sum_{i=1}^n \lambda_i b_i (a_i^\top y) = 0,
    \]
as we wanted to show.

For the second statement, let $q \in \rr^d$ be a vector such that $Mq = c$, and let $v \in S^{d-1}$ be a unit vector.  We want to show that the ray starting at $q$ with direction $v$ intersects or is parallel to at least one $X_i$. If $a_i^{\top}v=0$, then the ray is parallel to $X_i$ and we are done.  Suppose for a contradiction that the ray does not intersect any hyperplane $H_i$.  For each $i \in [n]$ the unique $t_i \in \rr$ such that $q + t_i v$ intersects $H_i$ is
\[
t_i = \frac{b_i - a_i^{\top}q}{a_i^{\top}v}.
\]
The condition above implies $t_i < 0$ for all $i$, which in turn implies that $(v^{\top}a_i)(b_i - a_i^{\top}q)=(b_i - a_i^{\top}q)(a_i^{\top}v)<0$ for all $i \in [n]$.  Since $\sum_{i=1}^n \lambda_i=1$, at least one $\lambda_i$ is positive.  This implies that
\begin{align*}
    0 > \sum_{i=1}^n \lambda_i (v^{\top}a_i)(b_i - a_i^{\top}q) = v^{\top} \left( \sum_{i=1}^n \lambda_ib_i a_i - \sum_{i=1}^n \lambda_i a_i a_i^{\top}q\right) = v^{\top}(c - Mq) = 0,
\end{align*}
which is the contradiction we wanted to find.
\end{proof}

This leads us to the following corollary.

\begin{corollary}\label{cor:family-conv-prd}
    Let $r,d$ be positive integers and let $X_1,\dots,X_r$ be finite families of hyperplanes in $\rr^d$.  If we have
    \(
    \bigcap_{j=1}^r \conv (\Phi (X_j)) \neq \emptyset\) we must also have \(\bigcap_{j=1}^r D(X_j)\neq \emptyset\). 
\end{corollary}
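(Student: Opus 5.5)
The corollary should follow directly from \cref{lem:technical-Mq}, applied once to the union of the families and then to each family separately. Suppose $(M,c) \in \bigcap_{j=1}^r \conv(\Phi(X_j))$. The key point is that the linear system $Mq = c$ depends only on the pair $(M,c)$, not on which family we used to represent it as a convex combination. So one common solution $q$ should witness membership in every regression hull at once.

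First we need a solution to exist. Take any index, say $j=1$. Since $(M,c) \in \conv(\Phi(X_1))$, the first part of \cref{lem:technical-Mq} applied to the family $X_1$ gives some $q \in \rr^d$ with $Mq = c$. Fix this $q$.

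Now let $j \in [r]$ be arbitrary. We have $(M,c) \in \conv(\Phi(X_j))$, and $q$ satisfies $Mq = c$. The second part of \cref{lem:technical-Mq}, applied to the family $X_j$, says that every such $q$ lies in $D(X_j)$. It matters here that the lemma quantifies over \emph{every} solution $q$, not just the one it constructs; this is what lets us use the same $q$ for all $j$. Hence $q \in \bigcap_{j=1}^r D(X_j)$, and this intersection is nonempty.

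There is no real obstacle beyond confirming this uniformity in the lemma's second statement. One small check: the hypothesis forces each $X_j$ to be nonempty, since $\conv(\emptyset)$ is empty, so the lemma applies to each family.
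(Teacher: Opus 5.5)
Your proof is correct and is the paper's argument: take $(M,c)$ in the intersection, use the first part of \cref{lem:technical-Mq} to get some $q$ with $Mq=c$, then use the ``every solution'' clause of the second part for each $j$ to get $q \in D(X_j)$. Your opening line says the lemma is applied to the union of the families, but what you actually do is apply it to $X_1$ alone, which is all that is needed.
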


\begin{proof}
Let $(M,c) \in \bigcap_{j=1}^r \conv (\Phi (X_j))$.  Let $q \in \rr^d$ be such that $Mq=c$, which exists by \cref{lem:technical-Mq}.  We also know by \cref{lem:technical-Mq} that $q \in D(X_j)$ for all $j \in [r]$, which implies $\bigcap_{j=1}^r D(X_j) \neq \emptyset$.
\end{proof}

\cref{cor:family-conv-prd} essentially states that any Tverberg-type result for families of points implies a Tverberg-type result for families of hyperplanes, at the cost of increasing the dimension to $(d(d+3)/2)-1$.  This is because, given a family of hyperplanes in $\rr^d$, we can lift them as points to $\mathcal{A}_d$ using $\Phi$, apply a Tverberg-type result in $\mathcal{A}_d$, and then reinterpret the result in $\rr^d$.  For example, we immediately obtain the following result.

\begin{corollary}
    Let $d,r$ be positive integers, then $N_\text{hyp}(t, d, r) = rt + \tilde{O}_{d,r}(\sqrt{t})$.
    
\end{corollary}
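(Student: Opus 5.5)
The corollary follows by combining the lift $\Phi$ with the known point bound $N_{\pt}(t,D,r) = rt + \tilde{O}_{D,r}(\sqrt{t})$ from \cite{Soberon2018}, where $D = \frac{d(d+3)}{2}-1$ is the dimension of the affine space $\mathcal{A}_d$. Alternatively, one can use \cref{thm:point-bound} in dimension $D$, which gives the stronger bound $rt + O_{d,r}(t^{1/2-1/(2D)})$; this is already $\tilde O(\sqrt t)$.

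Let $X$ be a family of $N = N_{\pt}(t,D,r)$ hyperplanes in $\rr^d$. Since $\Phi$ is injective, $\Phi(X)$ is a set of $N$ distinct points in $\mathcal{A}_d$. Identify $\mathcal{A}_d$ with $\rr^D$ by an affine isomorphism; this preserves convex hulls. We then obtain a partition of $\Phi(X)$ into $r$ parts that is a Tverberg partition with tolerance $t$. Pulling it back through $\Phi$ gives a partition $X_1,\dots,X_r$ of $X$.

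Now fix $C\subset X$ with $|C|\le t$. Injectivity of $\Phi$ gives $\Phi(X_j\setminus C) = \Phi(X_j)\setminus \Phi(C)$ and $|\Phi(C)| = |C| \le t$. The tolerance property therefore yields
\[
\bigcap_{j=1}^r \conv\bigl(\Phi(X_j\setminus C)\bigr)\neq\emptyset.
\]
Applying \cref{cor:family-conv-prd} to the families $X_j\setminus C$ gives $\bigcap_j D(X_j\setminus C)\neq\emptyset$. Hence $N_\text{hyp}(t,d,r)\le N_{\pt}(t,D,r) = rt+\tilde O_{D,r}(\sqrt t)$, and $D$ depends only on $d$.

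The matching lower bound $N_\text{hyp}(t,d,r)\ge rt$ is needed so that the statement reads as an equality of the form $rt+\ldots$. To get it, take $rt$ hyperplanes in general position. In any partition into $r$ parts, some part has at most $t$ hyperplanes, and removing all of them leaves an empty family. The regression hull of the empty family is empty, because no ray meets or is parallel to a hyperplane in it. So no partition of these $rt$ hyperplanes has tolerance $t$.

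None of these steps is substantial. The only care needed is the bookkeeping that the affine identification of $\mathcal{A}_d$ with $\rr^D$ preserves convex hulls, and that removal commutes with $\Phi$, which holds because $\Phi$ is injective.
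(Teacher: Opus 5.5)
Your proposal is correct and is the paper's argument: lift the hyperplanes to points of $\mathcal{A}_d$ via $\Phi$, apply the point bound of \cite{Soberon2018} in dimension $D = d(d+3)/2-1$, and pull the partition back with \cref{cor:family-conv-prd} (the paper just cites \cite{Soberon2018} after explaining this reduction). Your added bookkeeping is fine and left implicit by the paper: injectivity of $\Phi$ so that removal commutes with the lift, the affine identification $\mathcal{A}_d \cong \rr^D$, and the trivial lower bound $N_\text{hyp}(t,d,r) > rt$ obtained by emptying a part of size at most $t$.
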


\begin{proof}
    We apply the bounds from \cite{Soberon2018}.
\end{proof}

The theorem above holds since the $O(\cdot)$ hides the (polynomial) effect of $d$ and $r$.  We can of course improve this using \cref{thm:main-points}, but we aim for more.  The direct application of \cref{thm:main-points} would give us a bound of the form $\sim rt + O(t^{1/2-1/[d(d+3)-2]})$.  We can improve this by carefully analyzing the lift $\Phi$.

Given a finite family $X$ of hyperplanes in $\rr^d$, we consider $(X,\Rc_{\Phi}(X))$ the range space where we take the ranges induced by closed halfspaces in $\mathcal{A}_d$ applied to $\Phi(X)$.

\begin{lemma}\label{lem:dual-shatter}
    There is an absolute constant $A_0$ such that for every $d \ge 2$, every finite family of hyperplanes $X$ in $\rr^d$ and every $m \ge 1$, we have
    \[
    \pi^*_{\Rc_{\Phi}(X)}(m) \le (A_0 m)^d.
    \]
\end{lemma}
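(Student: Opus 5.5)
We need to bound the dual shatter function: choose $m$ closed halfspaces $G_1,\dots,G_m$ in $\mathcal{A}_d$, and count the equivalence classes they induce on $X$, i.e., the number of distinct membership patterns of the points $\Phi(H)$, $H\in X$. The plan is to pull each halfspace back to $\Hc_d$ through $\Phi$. This should describe it as a sign condition on a low-degree polynomial in only $d$ parameters. Then a Milnor--Thom/Warren-type bound on sign patterns of polynomials gives the $m^d$ growth.

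\textbf{Pulling back a halfspace.} A closed halfspace in $\mathcal{A}_d$ is the trace of a halfspace $\{(M,c): \langle P, M\rangle + u^{\top}c \ge \beta\}$ of $\Sym_d\times\rr^d$, with $P\in \Sym_d$, $u\in\rr^d$, $\beta\in\rr$. Evaluating at $\Phi(H(a,b))=(aa^\top, ba)$ gives the condition
\[
f(a,b) := a^{\top}Pa + b\,u^{\top}a - \beta \ge 0 .
\]
This is a polynomial of degree $3$ in $(a,b)\in\rr^{d+1}$. Since $H(a,b)=H(-a,-b)$ and $f$ is invariant under this sign change, the condition is well defined. To reduce the number of parameters from $d+1$ to $d$, I would restrict to a chart: in each of the $2d$ half-spheres $\{a_k>0\}$ (and lower-dimensional leftovers handled the same way inductively), normalize $a_k=1$ instead of $\|a\|=1$. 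Homogenizing accordingly, $f\ge 0$ becomes $a^\top P a + b u^\top a - \beta\|a\|^2\ge 0$, which is still a polynomial of degree at most $3$, now in the $d$ free variables $(a_{-k}, b)$. Alternatively, since the set of hyperplanes is $d$-dimensional, one can work directly on the $d$-dimensional real algebraic variety $S^{d-1}\times\rr$ and use the version of the sign-pattern bound for polynomials restricted to a variety.

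\textbf{Counting.} With $m$ polynomials $f_1,\dots,f_m$ of degree at most $3$ in $d$ variables, Warren's theorem (or the Milnor--Thom / Pollack--Roy bound for sign conditions, which includes zero patterns) bounds the number of realizable patterns of $(\mathrm{sign}\ge0)$ by $(Cm/d)^d \le (Cm)^d$ for an absolute $C$. Summing over the $O(d)$ charts and the lower-dimensional pieces multiplies the count by at most $2^{O(d)}$. This factor can be absorbed into $A_0^d$. Every equivalence class of $X$ corresponds to a distinct pattern, so $\pi^*_{\Rc_\Phi(X)}(m)\le (A_0 m)^d$.

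\textbf{Main obstacle.} The main technical step is getting the exponent exactly $d$, not $d+1$ or the ambient dimension $d(d+3)/2-1$. This requires applying the sign-pattern bound to $d$-variate polynomials, i.e., handling the projective identification of $a$ carefully, while keeping the constants purely exponential in $d$. The cleanest route is to cite the Pollack--Roy / Basu--Pollack--Roy bound $\binom{O(m)}{d}$ for sign conditions of bounded-degree polynomials on a $d$-dimensional variety, applied to $S^{d-1}\times\rr$. Then $\binom{O(m)}{d}\le (O(m))^d$ finishes the proof.
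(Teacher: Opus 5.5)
Your proposal is correct and follows essentially the same route as the paper: pull each halfspace of $\mathcal{A}_d$ back through $\Phi$, dehomogenize on one of $d$ coordinate charts (the paper uses $M_h=\{a_h=\max_i|a_i|\}$, you use $a_k>0$ with $a_k$ normalized to $1$), multiply by the squared norm to get a low-degree polynomial in the $d$ free variables, and apply the sign-condition bound for polynomials. There are two harmless slips: the polynomial actually has degree $2$, not $3$, since $b\,u^\top a$ is quadratic; and the $d$ charts $\{a_k>0\}$ already cover every hyperplane once you choose the sign representative, so there are no lower-dimensional leftovers to handle.
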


\begin{proof}
    Suppose we have $m$ ranges in $\Rc_{\Phi}(X)$.  We are going to cover $\Hc_d$ with $d$ closed sets, and show that in each of those sets, the $m$ ranges define at most $(Cm)^d$ regions, which will be enough to prove the lemma.

    For $h \in [d]$ the $h$-th region $M_h \subset \Hc_d$ is the set of hyperplanes $H(a,b)$ such that $a_h = \max_{i \in [d]}|a_i|$, where $a=(a_1,\dots,a_d)$.  Notice that every hyperplane is contained in at least one $M_h$, since at least one of its parametrizations $H(a,b)$ or $H(-a,-b)$ must have one of the largest coordinates in absolute value be positive.

    For each $H=H(a,b) \in M_h$, let $z_i = \frac{a_i}{a_h}$ for $i \neq h$, $\beta = \frac{b}{a_h}$, and $w = \frac{1}{a_h}a$.  This operation is easily reversible, and given $w$ we can find $a = w / \|w\|$ and $b = \beta / \|w\|$.

    A halfspace in $\mathcal{A}_d \subset \Sym_d \times \rr^d$ corresponds to the pairs $(a,b)$ such that some linear function on the coefficients of $(aa^{\top},ba)$ evaluates to a non-negative value.  If we multiply this function by $\|w\|^2$, we obtain a polynomial of degree $2$ on $d$ variables: $\beta$ and all $z_i$ for $i\neq h$.

    For $m$ real polynomials of degree $2$ in $d$ variables, the number of sign patterns in $\{-1,0,1\}^m$ realized in $\rr^d$ is at most $(Cm)^d$ for some absolute constant $C$ \cite{Matousek2010}*{Thm 5.5}, \cite{Pollack1993}.  The collection of all such sign patterns over all $M_h$ is an upper bound for the value of $\pi^*_{\Rc_{\Phi}(X)}(m)$.  Therefore, this is bounded above by $(A_0 m)^d$ for some absolute constant $A_0$, as we wanted to show.
\end{proof}

The lemma above is the key point in our analysis of hyperplanes.  The space $\mathcal{A}_d$ has dimension $D=(d(d+3)/2) - 1$, which makes it immediate that $\pi^*_{\Rc_{\Phi}(X)}(m) = O(m^{D})$.  The precise structure of the image of $\Phi$ allows us to save on the exponent.

We are now ready to bound the hereditary discrepancy of $\Rc_{\Phi}$.

\begin{corollary}\label{cor:hyp-her-bound}
    For every $d \geq 2$ and every finite family $X$ of hyperplanes in $\rr^d$, we have
    $$\herdisc(\Rc_\Phi(X)) \leq C_d|X|^{1/2 - 1/(2d)}\sqrt{\log(|X|+1)},$$
    where $C_d$ is a constant depending only on $d$.
\end{corollary}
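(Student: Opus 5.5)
We derive the bound by combining \cref{lem:dual-shatter} with \cref{thm:mww-expl} and pass to hereditary discrepancy via monotonicity of the dual shatter function under restriction. Fix $d\ge 2$, a finite family $X$ of hyperplanes in $\rr^d$, and an arbitrary subfamily $Y \subseteq X$. The induced range space $(\Rc_\Phi(X))_Y$ is exactly $\Rc_\Phi(Y)$: both consist of the sets $\{H\in Y : \Phi(H)\in G\}$ for closed halfspaces $G$ of $\mathcal{A}_d$. So it suffices to bound $\disc(\Rc_\Phi(Y))$ uniformly in terms of $|Y|\le |X|$.

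Apply \cref{lem:dual-shatter} to $Y$, which gives $\pi^*_{\Rc_\Phi(Y)}(m)\le (A_0m)^d = A_0^d m^d$ for all $m\ge 1$. This has the form required by \cref{thm:mww-expl}, with $A=A_0^d$ and $s=d>1$. The constants depend only on $d$, and not on $Y$. Hence
\[
\disc(\Rc_\Phi(Y)) \le C'_d\, |Y|^{1/2-1/(2d)}\sqrt{\log |Y|}
\]
for $|Y|\ge 2$, where $C'_d$ is the implicit constant of \cref{thm:mww-expl} for these parameters. For $|Y|\le 1$ the discrepancy is at most $1$. Since $|Y|\le|X|$ and the right-hand side is monotone in $|Y|$, we may replace $\sqrt{\log|Y|}$ by $\sqrt{\log(|X|+1)}$. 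We then enlarge the constant to absorb the small cases: $|Y|=1$ gives discrepancy $\le 1 \le C_d\sqrt{\log 2}$. Taking the maximum over $Y$ gives the claimed bound on $\herdisc(\Rc_\Phi(X))$.

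The only point needing care is that the constant in \cref{thm:mww-expl} is uniform: it depends only on $A$ and $s$, not on the range space. Here $A$ and $s$ depend only on $d$ through \cref{lem:dual-shatter}, which holds for every finite family and hence for every $Y$. I expect no real obstacle. The substantive work, the dual shatter bound with exponent $d$ rather than $\dim \mathcal{A}_d$, was already done in \cref{lem:dual-shatter}.
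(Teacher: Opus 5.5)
Your proposal is correct and follows the paper's proof: apply \cref{thm:mww-expl} with $s=d$, using the dual shatter bound of \cref{lem:dual-shatter}, which holds uniformly for every subfamily $Y\subseteq X$ (since $(\Rc_\Phi(X))_Y=\Rc_\Phi(Y)$). Your handling of the cases $|Y|\le 1$ and of monotonicity in $|Y|$ fills in details the paper leaves implicit in ``all estimates hold uniformly under restriction.''
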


\begin{proof}
    Apply \cref{thm:mww-expl} with $s = d$ and the bound on the dual shatter function from \cref{lem:dual-shatter}. All estimates hold uniformly under restriction.
\end{proof}

Now we are ready to prove \cref{thm:main-hyperplanes}.

\begin{theorem}
    For positive integers $t, d, r$, we have
    $$N_\text{hyp}(t, d, r) \leq rt + C_{d, r}t^{1/2 - 1/(2d)}\sqrt{\log(t + 1)},$$
    where $C_{d, r}$ is a constant depending only on $d$ and $r$.
\end{theorem}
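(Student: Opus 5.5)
The plan is to copy the proof of \cref{thm:point-bound}, working in the lifted space. Let $X$ be a family of $N$ hyperplanes in $\rr^d$. Consider the lifted point set $\Phi(X)\subset\mathcal{A}_d$, whose halfspace range space is exactly $(X,\Rc_\Phi(X))$. By \cref{lem:two-r-color} together with \cref{cor:hyp-her-bound}, there is a partition $X=X_1\sqcup\cdots\sqcup X_r$ with coloring $\chi$ such that
\[
\Delta=\disc(\Rc_\Phi(X),\chi,r)\le 2.0005\cdot\herdisc(\Rc_\Phi(X))\le C'_{d,r}N^{1/2-1/(2d)}\sqrt{\log(N+1)}.
\]

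Next I transfer the tolerance argument. The proof of \cref{lem:dev-tolerance} only uses the following facts: disjoint convex hulls of finite point sets can be separated, and the ranges are closed halfspaces. Both hold for points in the affine space $\mathcal{A}_d$ with closed halfspaces of $\mathcal{A}_d$. So the same argument, applied to the points $\Phi(X_j)$, gives the following. If $C\subset X$ satisfies $\bigcap_j\conv(\Phi(X_j\setminus C))=\emptyset$, then $|C|\ge N/r-r\Delta$. In the separation step we use open halfspaces $H_j$ of $\mathcal{A}_d$ with empty common intersection, and complements $G_j$ that are closed halfspaces, which are exactly the ranges of $\Rc_\Phi$.

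Hence for every $C$ with $|C|<N/r-r\Delta$ we have $\bigcap_j\conv(\Phi(X_j\setminus C))\ne\emptyset$. By \cref{cor:family-conv-prd}, applied to the families $X_j\setminus C$, this gives $\bigcap_j D(X_j\setminus C)\neq\emptyset$. The partition therefore has tolerance at least $N/r-rC'_{d,r}N^{\alpha}\sqrt{\log(N+1)}-1$, where $\alpha=1/2-1/(2d)$.

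The remaining step is the arithmetic. Set $E=K_{d,r}t^{\alpha}\sqrt{\log(t+1)}$ and $N=\lceil rt+E\rceil$. It suffices to show
\[
E/r\ge rC'_{d,r}N^{\alpha}\sqrt{\log(N+1)}+1.
\]
Since $E\le K t$ for $t\ge1$ (as $t^{\alpha}\sqrt{\log(t+1)}=O(t)$), we get $N\le (r+K+1)t$. Thus $\log(N+1)\le \log(t+1)+c_{d,r,K}$, so $\sqrt{\log(N+1)}\le c\sqrt{\log(t+1)}$ with $c$ depending on $d,r,K$.

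This $K$-dependence is the main obstacle, and I expect it to be the only delicate point. We would like to choose $K$ large, but $c$ depends on $K$, so the constants look circular. To avoid this, I first use $E\le rt$ for $t\ge t_0(K)$, which holds because $t^{\alpha}\sqrt{\log t}=o(t)$. This gives $N\le 2rt+1$, and the log factor is then bounded independently of $K$. Small $t<t_0(K)$ are handled by enlarging the constant $C_{d,r}$, since only finitely many values of $t$ remain. With the log factor controlled, concavity gives
\[
N^{\alpha}\le (rt)^{\alpha}+E^{\alpha}+1.
\]
The terms $E^{\alpha}\sqrt{\log}$ and $\sqrt{\log}$ are $o(E)$, while $(rt)^{\alpha}\sqrt{\log(t+1)}\le E/(3r^2C')$ once $K$ is large. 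This proves the claimed bound.
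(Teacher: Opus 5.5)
Your proposal is correct and is essentially the paper's proof. You lift by $\Phi$, take the Doerr--Fouz partition using \cref{cor:hyp-her-bound}, run the argument of \cref{lem:dev-tolerance} in $\mathcal{A}_d$, and pull back with \cref{cor:family-conv-prd}; you also carry out the final arithmetic, which the paper only asserts. Two remarks on that arithmetic: the $K$-dependence you worry about is only $O(\sqrt{\log K})$, so there is no real circularity; and if you keep the $t_0(K)$ detour, the finitely many small $t$ are covered because your tolerance bound $N/r - rC'_{d,r}N^{\alpha}\sqrt{\log(N+1)}-1$ tends to infinity, which makes $N_\text{hyp}(t,d,r)$ finite for each $t$.
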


\begin{proof}
    Let $X$ be a family of hyperplanes in $\rr^d$ and let $X' = \Phi(X)$.  By \cref{cor:hyp-her-bound}, we know that 
    \[
    H = \herdisc(\Rc_\Phi(X)) \leq C_d|X|^{1/2 - 1/(2d)}\sqrt{\log(|X|+1)}.
    \]
    Choose a partition $X_1' \sqcup \cdots \sqcup X_r'$ of $X'$ by \cref{lem:two-r-color}, with corresponding coloring $\chi': X' \to [r]$, and let us denote again
    \[
    \Delta = \disc(\Rc_{\hs}(X'), \chi', r).
    \]
    We know by \cref{lem:two-r-color} (using $3$ instead of $2.0005$ for simplicity) that $\Delta \leq 3 \cdot H$.  The discrepancy of $\Rc_\Phi$ is the discrepancy of the range space induced by halfspaces of $\mathcal{A}_d$ on $X' = \Phi(X)$.  Therefore, we can apply \cref{lem:dev-tolerance} in $\mathcal{A}_d$ and obtain that $\bigcap_{j=1}^r\conv(X'_j \setminus C) \neq \emptyset$ for all $C \subset X'$ of at most $t' = \left\lceil\frac{|X|}{r}-r\Delta \right\rceil-1$.  If $X_j \subseteq X$ is the family such that $\Phi(X_j) = X'_j$, by \cref{cor:family-conv-prd} we have that $\bigcap_{j=1}^r D(X_j \setminus C) \neq \emptyset$ for all sets $C \subseteq X$ of at most $t'$ hyperplanes.

    Finally, there exists a constant $C_{d, r}$ depending only on $d$ and $r$ such that as long as $|X| \ge rt + C_{d, r}t^{1/2 - 1/(2d)}\sqrt{\log(t + 1)}$, we have $t' \ge t$.
    
\end{proof}

\section{Lower bounds for tolerance}

The goal of this section is to prove the lower bound for \cref{thm:main-points}.  We use the discrepancy results from Chazelle, Matou\v{s}ek, and Sharir \cite{Chazelle1995} together with the following simple observation.

\begin{lemma}\label{lem:first-tolerance-bound}
    Let $r,t,p$ be positive integers and $X \subset \rr^d$ be a set of $rt+p$ points.  If a partition $X_1\sqcup \dots \sqcup X_r$ with corresponding coloring $\chi: X \to [r]$ has spread $\spr(\Rc_{\hs}(X), \chi, r) \ge p$, then it is not a Tverberg partition with tolerance $t$.
\end{lemma}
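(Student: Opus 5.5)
We argue directly from the spread: we exhibit a set $C$ of at most $t$ points whose removal separates two of the parts by a hyperplane. By definition of the spread, there are a closed halfspace $G$ and colors $i\neq j$ with
\[
|X_i\cap G| - |X_j\cap G| \ge p .
\]
It is more convenient to work with the complementary open halfspace $H=\rr^d\setminus G$. Write $a_k = |X_k\cap G|$ and $b_k=|X_k\cap H|$, so that $a_k+b_k=|X_k|$ and $\sum_k (a_k+b_k) = rt+p$.

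The removal set is $C=(X_i\cap H)\cup(X_j\cap G)$. After removing $C$, the set $X_i\setminus C$ lies in $G$ and $X_j\setminus C$ lies in $H$. The closed halfspace $G$ and the open halfspace $H$ are disjoint convex sets, so $\conv(X_i\setminus C)\cap\conv(X_j\setminus C)=\emptyset$. Hence $\bigcap_k \conv(X_k\setminus C)=\emptyset$. (If one of the parts becomes empty, its convex hull is empty, which only helps.)

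It remains to check that $|C| = b_i + a_j \le t$. We may also choose the closed halfspace $G$ and the pair $(i,j)$ to achieve the maximum spread, and we can use freedom in orientation. Consider both $C$ and the alternative $C'=(X_i\cap G)\cup (X_j\cap H)$, for which the roles of the sides are swapped; $C'$ works equally well, since $X_i\setminus C'\subseteq H$ and $X_j\setminus C'\subseteq G$. We have
\[
|C|+|C'| = |X_i|+|X_j| .
\]
This is not obviously $\le 2t$, so a better choice is needed. Instead, use the smaller of $|X_i|$-type quantities cleverly: $|C| = b_i + a_j = |X_i| - (a_i - a_j)\le |X_i| - p$.

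So it suffices that $|X_i|\le t+p$. This is where the main obstacle lies: parts may be unbalanced. To handle it, first note that if some part satisfies $|X_k|\le t$, we simply remove that entire part, $C=X_k$, and the intersection is empty. Otherwise every $|X_k|\ge t+1$, so
\[
|X_i| = rt+p-\sum_{k\ne i}|X_k| \le rt+p-(r-1)(t+1)\le t+p,
\]
using $r\ge 2$. Then $|C|\le |X_i|-p\le t$, completing the argument. The key steps are thus the case split on whether some part has at most $t$ points, followed by the explicit separating removal set built from the halfspace witnessing the spread.
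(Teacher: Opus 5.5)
Your proof is correct and follows essentially the same route as the paper. If some part has at most $t$ points you delete it entirely; otherwise the heavy color has at most $t+p$ points, and you delete its points outside the witnessing halfspace together with the light color's points inside it. The paper phrases the case split as a pigeonhole on $|X_1|,\dots,|X_{r-1}|,|X_r|-p$, which sum to $rt$; this is equivalent to your split, and your detour through $C'$ is unnecessary and can be dropped.
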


\begin{proof}
    Let $H^+$ be the closed halfspace that realizes the maximum spread.  Assume without loss of generality that $|X_r \cap H^+|\ge p + |X_1 \cap H^+|.$  Consider the numbers $|X_1|,|X_2|,\dots,|X_{r-1}|,(|X_r|-p)$.  Their sum is $rt$, so at least one term must be at most $t$.  If this is $|X_j|$ for $j \neq r$, we can remove all the points of $X_j$.  This would make the $j$-th part empty, which gives us a partition that is not Tverberg.  Therefore $|X_r| - p \le t$.  In this case, let us remove all points of $X_1$ in $H^+$ and all points of $X_r$ in the complement of $H^+$.  We have removed at most $|X_1 \cap H^+| + |X_r \cap (H^+)^c| \le (|X_r\cap H^+|-p)+ |X_r \cap (H^+)^c| \le |X_r| - p \le t$ points.  By doing so, the boundary of $H^+$ now separates what is left in $X_1$ and $X_r$, showing that the partition is not a Tverberg partition.
\end{proof}

Now we construct a family of $n$ points in $\rr^d$ with large spread with respect to $\Rc_{\hs}$.  The set of points will essentially be a grid of $\sim n^{1/d}$ points in each direction.  Formally, let $M = \lceil n^{1/d}\rceil $ and consider the set
\[
K=\left\{\frac{1}{2dM}\left( {z_1}, \dots, {z_d}\right) \in \R^d \mid z_i \in \Z,\ 0 \leq z_i < M\right\} \subseteq \left[0,\frac{1}{d}\right]^d
\]
We take $X_n$ to be a set of $n$ points in $K$.  This set lies in the box $[0,1/d]^d$.  If we denote by $\rho$ the minimum distance between two points in $X_n$, we have \[
\rho = \min_{\substack{x \neq y \\ x, y \in X_n}} \|x-y\|\ge \frac{1}{2dM}\ge \frac{1}{4d}n^{-1/d}.
\]

The main result to establish the spread will be the following result by Chazelle et al.

\begin{theorem}[Chazelle, Matou\v{s}ek, Sharir 1995, \cite{Chazelle1995}*{Section 3}]\label{thm:useful-chazelle}
    Let $m,d$ be positive integers and $A \subset \rr^d$ be a subset of $m$ points of $\rr^d$.  Let $\operatorname{diam}(A)$ be the largest possible distance between two points of $A$, and $\operatorname{radius}(A)$ be the shortest possible distance between any two points of $A$.  Suppose that $\operatorname{diam}(A)/\operatorname{radius}(A) \le Cm^{1/d}$ for some positive constant $C$.  Then, for any two-coloring of $A$ there exists a half-space $H$ within which one color outnumbers the other by at least $c m^{1/2-1/(2d)}$, for some constant $c>0$ that depend on $C$ and $d$.
\end{theorem}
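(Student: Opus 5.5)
The plan is to follow the $L^2$ (integral-geometric) method of Alexander and Beck, in the form used by Chazelle, Matou\v{s}ek, and Sharir. Fix a coloring $\chi: A \to \{-1,+1\}$ and consider the signed measure $\mu=\sum_{x\in A}\chi(x)\delta_x$. We want a closed halfspace $H$ with $|\mu(H)|\ge c\,m^{1/2-1/(2d)}$. Rescale so that $\operatorname{radius}(A)=\rho=1$ (the minimum separation); then $\operatorname{diam}(A)\le Cm^{1/d}$. Every point of $A$ lies in a ball $B$ of radius $R\le Cm^{1/d}$.

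First reduce to the balanced case. If $|\chi(A)|\ge c\,m^{1/2-1/(2d)}$, a halfspace containing all of $A$ already works. Otherwise, replace $\mu$ by $\nu=\mu-\frac{\chi(A)}{|B|}\,\mathrm{Leb}|_B$, which has total mass $0$. For every halfspace $H$ we have $|\nu(H)-\mu(H)|\le |\chi(A)|$, which is below the target. It therefore suffices to find $H$ with $|\nu(H)|\ge 2c\,m^{1/2-1/(2d)}$.

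Second, parametrize halfspaces as $H_{u,t}=\{x: u\cdot x\ge t\}$ with $u\in S^{d-1}$ and $t\in\rr$. Since $\nu$ has zero mass and is supported in $B$, $\nu(H_{u,t})=0$ unless $|t|\le R$. The key identity (Alexander/Crofton) states that for a compactly supported signed measure of zero total mass,
\[
\int_{S^{d-1}}\int_{\rr}\nu(H_{u,t})^2\,dt\,du=-\kappa_d\iint |x-y|\,d\nu(x)\,d\nu(y)=\kappa_d'\int_{\rr^d}|\hat\nu(\xi)|^2|\xi|^{-d-1}\,d\xi .
\]
The first equality follows from $\int_{\rr}\mathbf 1[u\cdot x\ge t]\mathbf 1[u\cdot y\ge t]\,dt$ differing from $\min$-type terms by $|u\cdot(x-y)|/2$, combined with $\int_{S^{d-1}}|u\cdot z|\,du=\kappa|z|$. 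The second uses the fact that the distributional Fourier transform of $|x|$ is $-c|\xi|^{-d-1}$ away from the origin, with the zero mass killing the singular part. The domain of integration has measure $O(R)$, so some $H$ satisfies
\[
\nu(H)^2\gtrsim R^{-1}\int|\hat\nu(\xi)|^2|\xi|^{-d-1}\,d\xi .
\]

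Third, and this is the main obstacle, I need the energy lower bound $\int|\hat\nu|^2|\xi|^{-d-1}\,d\xi\gtrsim m$. The plan is to fix a smooth bump $\varphi$ supported in the ball of radius $1/3$, with $\int\varphi=1$. Because of the separation, the translates $\varphi(\cdot-x)$ for $x\in A$ are disjoint, so $\|\mu*\varphi\|_2^2=m\|\varphi\|_2^2\asymp m$. Subtracting the smooth, small-density correction changes this by only $O(\chi(A)^2/|B|)=o(m)$, so $\int|\hat\nu|^2|\hat\varphi|^2\,d\xi\gtrsim m$. Now split frequencies at a large constant $K$. On $|\xi|\le K$ the weight satisfies $|\xi|^{-d-1}\ge K^{-d-1}$. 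On $|\xi|>K$, the rapid decay of $\hat\varphi$ together with the trivial bound $|\hat\nu|\le m$ pointwise cannot be used directly. Instead, I would choose $\varphi=\psi*\psi$ and bound the high-frequency part of $\int|\hat\nu|^2|\hat\varphi|^2$ by $\sup_{|\xi|>K}|\hat\psi(\xi)|^2\cdot\int|\hat\nu|^2|\hat\psi|^2\lesssim \epsilon(K)\,m$, again by disjointness. Taking $K$ large, the low frequencies carry $\gtrsim m$ of the $\hat\varphi$-weighted mass, and since $|\hat\varphi|\le 1$, this gives
\[
\int|\hat\nu|^2|\xi|^{-d-1}\,d\xi\ge K^{-d-1}\int_{|\xi|\le K}|\hat\nu|^2|\hat\varphi|^2\,d\xi\gtrsim m .
\]
Combining with the second step gives $\nu(H)^2\gtrsim m/R\gtrsim m^{1-1/d}/C$, that is, $|\nu(H)|\gtrsim m^{1/2-1/(2d)}$, which finishes the argument. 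The delicate points I expect to spend effort on are justifying the Fourier identity for measures rather than functions (handled via mollification and zero total mass) and controlling the compensating Lebesgue term in the energy estimate.
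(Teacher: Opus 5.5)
The paper gives no proof of this statement; it cites it from Chazelle--Matou\v{s}ek--Sharir. Your argument is a correct, self-contained proof, but it takes a genuinely different route. It is Alexander's integral-geometric identity combined with a Beck-style Fourier energy estimate, which is the analytic machinery the cited ``elementary approach'' was written to avoid.

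Your route is transparent. The identity $\int\!\int \nu(H_{u,t})^2\,dt\,du=-\kappa\iint|x-y|\,d\nu(x)\,d\nu(y)$ is an exact average over all halfspaces. Separation enters only through $\int|\hat\nu|^2|\xi|^{-d-1}\,d\xi\gtrsim m$, and your $\psi*\psi$ device for the high frequencies is sound. The diameter enters only through the $O(R)$ measure of the parameter domain, so the exponent is visibly $\sqrt{m/R}$. The same computation also gives Alexander's bound against Lebesgue measure.

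The price is analytic. You need the distributional Fourier transform of $|x|$ with a limiting argument. Only the inequality $-\iint|x-y|\,d\nu\,d\nu\ge c\int|\hat\nu|^2|\xi|^{-d-1}\,d\xi$ is actually used, and it follows from mollifying and applying Fatou's lemma. You also need a compensating Lebesgue term to reach zero total mass.

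The elementary approach instead stays with the discrete coloring. In essence, it averages the squared signed counts of alternating (finite-difference) combinations of a few parallel halfspaces, spaced at the scale of the minimum distance, over random positions and directions. Because these weights sum to zero, the total imbalance $\chi(A)$ cancels automatically. Separation makes the diagonal terms large, and interactions between far-apart pairs are bounded by direct computation. This costs some dimension-dependent bookkeeping but needs no harmonic analysis.

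Two points should be made explicit in your write-up.
\begin{itemize}
\item The packing bound $|B|\gtrsim m$ is what makes the correction $\chi(A)^2/|B|$ negligible. It holds because the disjoint balls of radius $1/2$ around the points of $A$ lie in a ball of radius $R+1/2$.
\item The order of constants must be fixed. First take $c$ small enough, depending on $d$, that the correction changes $\|\nu*\varphi\|_2^2$ and $\|\nu*\psi\|_2^2$ by at most a fixed fraction of $m$. This determines the constant $c_1$ produced by your second and third steps. Then also require $c\le c_1/2$.
\end{itemize}
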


Now we are able to prove the lower bound for \cref{thm:main-points}.

\begin{theorem}
    Let $t,d,r$ be positive integers with $d\ge 2,\ r \geq 2$.  Then, we have
    \[
    N_{\pt}(t,d,r) \ge rt + c_{d, r}t^{1/2 -1/(2d)},
    \]
    where $c_{d, r} > 0$ depends only on $d$ and $r$.
\end{theorem}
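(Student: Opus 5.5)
We take $n = rt + p$ with $p = \lfloor c_{d,r} t^{1/2-1/(2d)}\rfloor$ and let $X = X_n$ be the grid subset constructed above. We will show that every partition $X_1\sqcup\dots\sqcup X_r$ of $X$ has spread at least $p$ with respect to $\Rc_{\hs}(X)$. Then \cref{lem:first-tolerance-bound} shows that no partition of $X$ is a Tverberg partition with tolerance $t$, so $N_{\pt}(t,d,r) > rt+p$. For small $t$ the bound is trivial once $c_{d,r}$ is small (we may take $p=0$, or use $N_{\pt}\ge rt+1$ coming from the removal of a part), so we may assume $t$ is large.

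\textbf{Step 1: pick a dense pair of colour classes.} Given a partition, the average part size is $n/r$. We pick the two largest parts, say $X_1$ and $X_2$. Since $r\ge 2$, we have $|X_1\cup X_2| \ge 2n/r$. Put $A = X_1\cup X_2$ and $m = |A|$, so $n \ge m\ge 2n/r$.

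\textbf{Step 2: verify the hypothesis of \cref{thm:useful-chazelle} for $A$.} Since $A\subseteq X_n\subseteq[0,1/d]^d$, we have $\operatorname{diam}(A)\le 1/\sqrt d\le 1$. The minimal distance of $A$ is at least $\rho\ge \frac{1}{4d}n^{-1/d}$. Hence $\operatorname{diam}(A)/\operatorname{radius}(A)\le 4d\,n^{1/d}\le 4d\,r^{1/d} m^{1/d}$, which is of the form $Cm^{1/d}$ with $C=4dr^{1/d}$ depending only on $d,r$. This is the step that needs care: $A$ is an arbitrary subset of the grid, and that is why we chose the two largest classes, so that $m$ is comparable to $n$ and the separation ratio stays controlled.

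\textbf{Step 3: apply \cref{thm:useful-chazelle} and conclude.} We two-colour $A$ by $X_1$ versus $X_2$. By \cref{thm:useful-chazelle} there is a halfspace $H$ with $\bigl||X_1\cap H|-|X_2\cap H|\bigr|\ge c\,m^{1/2-1/(2d)}\ge c(2n/r)^{1/2-1/(2d)}\ge c'(rt)^{1/2-1/(2d)}$, where $c,c'$ depend on $d,r$. The theorem gives an open or closed halfspace. If it is open, we perturb or translate it slightly to a closed halfspace containing the same points, which is possible because $A$ is finite. Thus $\spr(\Rc_{\hs}(X),\chi,r)\ge c' t^{1/2-1/(2d)}\ge p$ once $c_{d,r}\le c'$. \cref{lem:first-tolerance-bound} then finishes the argument.

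The main obstacle is Step 2, namely making sure \cref{thm:useful-chazelle} applies to a subset of the grid rather than the whole grid. Choosing the two largest colour classes handles this, since it keeps $m=\Theta_r(n)$.
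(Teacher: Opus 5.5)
Your proof is correct and follows the paper's argument step for step. You take the two largest colour classes so that $m \ge 2n/r$, bound the diameter-to-separation ratio of the grid subset, apply \cref{thm:useful-chazelle} to the $X_1$-versus-$X_2$ colouring, and conclude with \cref{lem:first-tolerance-bound} at $n = rt+p$; your extra remarks (the explicit constant $C = 4dr^{1/d}$, the small-$t$ case, and closed versus open halfspaces) only spell out details the paper leaves implicit.
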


\begin{proof}
    We can assume that $d,r \ge 2$, since the cases $d=1$ and $r=1$ are either solved or trivial, respectively.

    Let $n$ be a positive integer and let $X_n$ be the set as defined above.  Let $X_1 \sqcup \dots \sqcup X_r = X_n$ be any partition of $X_n$ with correspondng coloring $\chi: X \to [r]$.  Without loss of generality, we may assume that $X_1$ and $X_2$ are the largest parts of the partition.  Therefore, for $A = X_1 \cup X_2$ we have $m:=|A| \ge 2n/r$.  Moreover, by construction $\operatorname{diam}(A)/\operatorname{radius(A)} \le Cm^{1/d}$, which means we can apply \cref{thm:useful-chazelle} and obtain

    \begin{align*}
        \spr(\Rc_{\hs}(X), \chi, r) \ge \spr(\Rc_{\hs}(A), \chi|_A, 2)&\ge c_{d}(2n/r)^{1/2-1/(2d)} \\ &\ge b_{d,r}n^{1/2-1/(2d)}
    \end{align*}

    for some constant $b_{r,d}>0$.  Now let $p = \lfloor b_{d,r}(rt)^{1/2-1/(2d)} \rfloor$ and set $n = rt+p$.

    Then, $b_{r,d}n^{1/2-1/(2d)} \ge b_{r,d}(rt)^{1/2-1/(2d)} \ge p$.
    In particular, we can now apply \cref{lem:first-tolerance-bound} and finish the proof.
\end{proof}

\bibliography{refref}

\end{document}